\newcommand{\tmscript}[1]{\text{\scriptsize{$#1$}}}
\newtheorem{lemma}{Lemma}
\newtheorem{theorem}{Theorem}
\newtheorem{definition}{Definition}
\newcommand{\assign}{:=}
\newcommand{\mathd}{\mathrm{d}}
\newcommand{\mathi}{\mathrm{i}}
\newcommand{\tmop}{\text}
\newcommand{\um}{-}
\newcommand{\upl}{+}
\newcommand{\tmem}{\textit}
\newcommand{\tmtextbf}{\textbf}
\begin{document}

\title{A converse to Hal\'asz's theorem}

\begin{abstract}
  We show that the distribution of large values of an additive function on the integers, and the distribution
  of values of the additive function on the primes are related to each other via a Levy Process. 
  As a consequence we obtain a converse to an old theorem of Halasz \cite{Halasz}. Halasz proved that if
  $f$ is an strongly additive function with $f(p) \in \{0,1\}$, then $f$ is Poisson distributed
  on the integers. We prove, conversely, that if $f$ is Poisson distributed on the integers then for most primes $p$,
  $f(p) = o(1)$ or $f(p) = 1 + o(1)$. 
\end{abstract}

\author{Maksym Radziwi\l\l}
\address{Department of Mathematics \\ Stanford University\\
450 Serra Mall, Bldg. 380\\Stanford, CA 94305-2125}
\email{maksym@stanford.edu}
\thanks{The author is partially supported by a NSERC PGS-D award}
\subjclass[2010]{Primary: 11N64, Secondary: 11N60, 11K65, 60F10}

\maketitle

\section{Introduction.}

Let $g$ be an strongly additive function (that is, $g(m n) = g(m) + g(n)$ for $(m,n) = 1$ and $g(p^k) = g(p)$). According to a model due to Mark
Kac \cite{Kac} the distribution of the $g (n)$'s (with $n \leqslant x$ and $x$ large) is
predicted by the random variable $\sum_{p \leqslant x} g (p) X_p$ where the
$X_p$'s are independent Bernoulli random variables with
\[ \mathbb{P}(X_p = 1) = 1 -\mathbb{P}(X_p = 0) = \frac{1}{p} \]
According to this model most $g (n)$'s cluster around the mean $\mu (g ; x)$
of $\sum_{p \leqslant x} g (p) X_p$ and within a constant multiple of $B (g
; x)$. Here,
\[ \mu (g ; x) \assign \sum_{p \leqslant x} \frac{g (p)}{p} \text{ \ and \ }
   B (g ; x)^2 \assign \sum_{p \leqslant x} \frac{g (p)^2}{p} \]
In this paper (a follow up to \cite{Maks}) we investigate the inter-relation between
the distribution of an additive function $g$ on the integers, i.e,
\begin{equation}
  \label{integers}
  \frac{1}{x} \cdot \# \left\{ n \leqslant x : \frac{g (n) - \mu (g ; x)}{B (g
  ; x)} \geqslant \Delta \right\}
\end{equation}
in the range $1 \leqslant \Delta \ll B (g ; x)^{1-\varepsilon}$, and the
distribution of the additive function $g$ on the primes:
\begin{equation}
  \label{primes}
  \lim_{x \rightarrow \infty} \frac{1}{B (g ; x)^{2}}
  \sum_{\tmscript{\begin{array}{c}
    p \leqslant x\\
    g (p) \leqslant t
  \end{array}}} \frac{g (p)^2}{p}
\end{equation}
for fixed $t$. Previously this question was considered only in the context
of limit theorems (that is, with $\Delta = O (1)$ in $(1)$) and in that
situation the behavior of (\ref{integers}) 
is controlled by the distribution of $g (p) / B (g ; x)$ for $p \leqslant x$ (see \cite{Elliott}, p. 12). 

We will prove, roughly speaking, that in the range
$1 \leq \Delta \leq B(g;x)^{1 - \varepsilon}$, (\ref{integers}) behaves as 
a sum of $B (f ; x)^{2}$
independent and identically distributed random variables if and only if 
(\ref{primes})
converges to a distribution function for almost all $t$. Since $B (f ; x)^{2}$
might not be an integer there is some care necessary in defining what it means
to have ``$B (f ; x)^{2}$ random variables''. For this reason we first
illustrate our results in the special case of the Poisson distribution.

Following the work of Selberg \cite{Selberg} and Sathe \cite{Sathe} (in the case $g (p) = 1$) and Halasz
\cite{Halasz} (in the general case $g (p) \in \{0, 1\}$), it is known that if $g$ is a
strongly additive function with $g (p) \in \{0, 1\}$ and $B (g ; x)
\rightarrow \infty$, then
\begin{equation}
  \label{Poisson}
  \frac{1}{x} \cdot \# \left\{ n \leqslant x : \frac{g (n) - \mu (g ; x)}{B (g
  ; x)} \geqslant \Delta \right\} \sim \mathbb{P} \tmop{oisson} \left( B (g
  ; x)^{2} ; \Delta \right)
\end{equation}
uniformly in $1 \leqslant \Delta \leqslant o (B (g ; x))$, where,
\[ \mathbb{P} \tmop{oisson} \left( \lambda, \Delta \right) \assign \sum_{v
   \geqslant \lambda + \sqrt[]{\lambda} \Delta} e^{- \lambda} \cdot
   \frac{\lambda^v}{v!} \]
denotes the tail of a Poisson distribution with parameter $\lambda$. It is not
difficult to see that the condition $g (p) \in \{0, 1\}$ is not strictly
necessary for the validity of (\ref{Poisson}) (for example one could let $g (2) = 10$
and $(3)$ would still hold). Indeed, 
a minor variation on Halasz's theorem \cite{Halasz}
shows that to guarantee (\ref{Poisson}) it is enough to have $0 \leqslant g (p)
\leqslant O (1)$, $B (g ; x) \rightarrow \infty$ and
\begin{equation}
  \label{primecondition}
  \frac{1}{B (g ; x)^{2}} \sum_{\tmscript{\begin{array}{c}
    p \leqslant x\\
    g (p) \leqslant t
  \end{array}}} \frac{g (p)^2}{p} = \delta_1 (t) + O \left( \frac{1}{B (g ;
  x)^{2}} \right)
\end{equation}
as $x \rightarrow \infty$, uniformly in $t \in \mathbb{R}$, and where
\begin{equation}
\label{deltaone}
\delta_{\alpha} (t) \assign \begin{cases}
0 & \text{ if } t < \alpha \\
\tfrac 12 & \text{ if } t = \alpha \\
1 & \text{ if } t > \alpha
\end{cases}
\end{equation}
Relation (\ref{primecondition}) roughly asserts that for most primes $p$, $g (p) = o (1)$ or $g
(p) = 1 + o (1)$. It turns out that a condition such as (\ref{primecondition}) (perhaps with a
weaker error term) is not only sufficient for (\ref{Poisson}) but also necessary. This
is a special case of our main result (Theorem \ref{MainThm}).

\begin{theorem}
  \label{PoissonTheorem}
  Let $g$ be a strongly additive function. If $(3)$ holds uniformly in $1
  \leqslant \Delta \ll B (g ; x)^{1 - \varepsilon}$, then, as $x \rightarrow
  \infty$,
  \[ \frac{1}{B (g ; x)^{2}} \sum_{\tmscript{\begin{array}{c}
       p \leqslant x\\
       g (p) \leqslant t
     \end{array}}} \frac{g (p)^2}{p} \longrightarrow \delta_1 (t). \]
  for all $t \neq 1$. 
\end{theorem}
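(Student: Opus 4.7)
My strategy is to translate the tail hypothesis \eqref{Poisson} into the assertion $\sum_{p \leq x} g(p)^k/p \sim B(g;x)^2$ for every integer $k \geq 2$, and then to collapse the resulting weighted measure on $\{g(p) : p \leq x\}$ to a point mass at $1$ via a one-line second-moment argument. The bridge between the integer side and the prime side is the Laplace transform of $g(n) - \mu(g;x)$, accessed through a Kubilius-type independence model.

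First I would use a Kubilius/Tur\'an-Kac style independence model (see~\cite{Elliott}, in the quantitative form valid on the range $\Delta \ll B^{1-\varepsilon}$ presumably developed in the precursor~\cite{Maks}) to compare $(g(n)-\mu)/B$ on the integers with $(S_x - \mu)/B$, where $S_x := \sum_{p \leq x} g(p) X_p$ and $X_p \sim \mathrm{Bernoulli}(1/p)$ are independent. The hypothesis then reads: for $1 \leq \Delta \ll B^{1-\varepsilon}$,
\[
\mathbb{P}(S_x - \mu \geq B\Delta) \;\sim\; \mathbb{P}(Y - B^2 \geq B\Delta), \qquad Y \sim \mathrm{Poisson}(B^2).
\]
A Cram\'er/saddle-point correspondence (Legendre duality between tails and Laplace transforms for sums of independent random variables) forces the two Laplace transforms to match over the dual range of $s$; expanding the compound-Poisson form of $\log \mathbb{E}[e^{s(S_x - \mu)}]$ and retaining the leading contribution gives
\[
\sum_{p \leq x} \frac{\phi(sg(p))}{p} \;=\; B(g;x)^2\, \phi(s)\, (1 + o(1)), \qquad \phi(u) := e^u - 1 - u,
\]
uniformly for $s$ in an interval $(0, B^{-\varepsilon}]$.

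Extracting coefficient-wise information from this identity yields $\sum_{p \leq x} g(p)^k/p \sim B(g;x)^2$ for every fixed $k \geq 2$. Introducing the probability measure
\[
\nu_x \;:=\; \frac{1}{B(g;x)^2} \sum_{p \leq x} \frac{g(p)^2}{p}\, \delta_{g(p)}
\]
on $[0,\infty)$, this reads $\int t^j\, d\nu_x \to 1$ for every $j \geq 0$, whence
\[
\int (t-1)^2\, d\nu_x \;=\; \int t^2\, d\nu_x - 2 \int t\, d\nu_x + \int d\nu_x \;\longrightarrow\; 0.
\]
So $\nu_x$ collapses to $\delta_1$, which is exactly the desired conclusion $\nu_x((-\infty, t]) \to \delta_1(t)$ for every $t \neq 1$.

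The main obstacle is the Cram\'er-type inversion, passing from tails back to the Laplace transform: the standard Chernoff/saddle-point direction goes the other way, and here the reverse implication is needed. Moreover, since the hypothesis only controls $\Delta$ up to $B^{1-\varepsilon}$, the corresponding control on the Laplace transform holds only on the shrinking interval $(0, B^{-\varepsilon}]$ about the origin, so a naive Cauchy-integral coefficient extraction loses too much already for $k \geq 3$: instead one likely exploits the positivity and convexity of $\phi$ and inducts on $k$, comparing the values of $\sum_p \phi(sg(p))/p$ at well-chosen $s$. A secondary difficulty is ensuring the Kubilius independence model remains valid uniformly across the full large-deviation range $\Delta \ll B^{1-\varepsilon}$, well beyond its classical central-limit range of applicability; this is presumably one of the main technical contributions of~\cite{Maks}.
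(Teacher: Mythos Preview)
Your outline is correct in its overall architecture and lands on the same endpoint as the paper: one shows that the ``prime moments'' $B(g;x)^{-2}\sum_{p\le x} g(p)^{k+2}/p$ converge (to $1$ in the Poisson case, to $\int t^k\,\mathd\Psi$ in general), and then the method of moments --- or, as you observe, simply $\int (t-1)^2\,\mathd\nu_x\to 0$ when $\Psi=\delta_1$ --- finishes the job. Where you diverge from the paper is in the mechanism of the inversion you correctly flag as the crux. The paper does not attempt to recover the Laplace transform from the tails; instead it quotes Maciulis's large-deviation expansion for $\mathcal{D}_g(x;\Delta)$ and Hwang's analogous expansion for $\mathbb{P}(\mathcal{Z}_\Psi(B^2)\ge \Delta B)$, each an explicit asymptotic series in $\Delta/B$ with coefficients $\lambda_g(x;k)$ and $\Lambda(\Psi;k)$ tied by recurrences to the prime moments and to $\int t^k\,\mathd\Psi$ respectively. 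Matching the two expansions at the sequence of scales $\Delta=B^{1-2^{-(k+1)}}$ yields $\lambda_g(x;k)\to\Lambda(\Psi;k)$ by induction on $k$; unwinding the recurrences then gives the moment convergence. This is exactly the ``induct on $k$ at well-chosen parameters'' you anticipate, but carried out on the large-deviation side rather than the Laplace side. A side benefit of quoting Maciulis directly is that your secondary concern about the validity of the Kubilius model in the full range $\Delta\ll B^{1-\varepsilon}$ never arises: that work is already baked into Maciulis's theorem.
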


Both Theorem \ref{PoissonTheorem} and the sufficient condition (\ref{primecondition}) are special cases of
more general results which we will now state. Our proofs build in a fundamental way on Halasz's paper \cite{Halasz}
and its application to large deviations of additive functions developed 
by Halasz (unpublished, see \cite{Elliott}) 
and the Lithuanian school (for
example \cite{Maciulis}). The main tool underneath these large deviations 
results is Cramer's method \cite{Cramer}, 
first transposed into an arithmetic context by Kubilius, in \cite{Kubilius}. 

Despite being more technical our main result (Theorem \ref{MainThm} and Theorem \ref{MainThm2}) has
the merit of making clear the inter-relation between the distribution of the
additive function on the integers and its distribution on the primes. This
inter-relation  by Levy Processes (in fact a modified version of
that concept) which we now introduce into our discussion.

\begin{definition}
  Let $\Psi$ be a distribution function. Suppose that $\Psi (\alpha) - \Psi
  (0) = 1$ for some $\alpha > 0$. We define $\{\mathcal{Z}_{\Psi} (u)\}_{u
  \geqslant 0}$ to be a sequence of random variables with distribution
  determined by
  \[ \mathbb{E} \left[ e^{\mathi t\mathcal{Z}_{\Psi} (u)} \right] = \exp
     \left( \mathi u \cdot \int_0^{\infty} \frac{e^{\mathi tx} - \mathi tx -
     1}{x^2} \cdot \mathd \Psi (x) \right) \]
  for each $u \geqslant 0$.
\end{definition}

For integer $u \in \mathbb{N}$, the random variable $\mathcal{Z}_{\Psi} (u)$
can be thought off as a sum of $u$ independent copies of $\mathcal{Z}_{\Psi}
(1)$. Thus $\mathcal{Z}_{\Psi} (u)$ is a continuous generalization of the
notion of a random walk. Notice also that if $\Psi (t) = \delta (t)$, then
$\mathcal{Z}_{\Psi} (u)$ is a centered Poisson random variable with parameter
$u$. \

Theorem \ref{PoissonTheorem} is a particular case (i.e $\Psi (t) = \delta_1 (t))$ of the
following result. 

\begin{theorem}
  \label{MainThm}
  Let $g$ be a strongly additive function with $0 \leqslant g (p) \leqslant O
  (1)$ and $B (g ; x) \rightarrow \infty$. If there is a distribution function
  $\Psi (t) \neq \delta_0 (t)$ (such that $\Psi (\alpha) - \Psi (0) = 1$ for
  some $\alpha > 0$) and such that,
  \begin{equation}
    \label{assumption}
    \frac{1}{x} \cdot \# \left\{ n \leqslant x : \frac{g (n) - \mu (g ; x)}{B
    (g ; x)} \geqslant \Delta \right\} \sim \mathbb{P} \left(
    \mathcal{Z}_{\Psi} (B (g ; x)^{2}) \geqslant \Delta B(g;x) \right)
  \end{equation}
  uniformly in $1 \leqslant \Delta \ll B (g ; x)^{1 - \varepsilon}$, then,
  \begin{equation}
    \label{conclusion}
    \lim_{x \rightarrow \infty} \frac{1}{B (g ; x)^{2}}
     \sum_{\tmscript{\begin{array}{c}
       p \leqslant x\\
       g (p) \leqslant t
     \end{array}}} \frac{g (p)^2}{p} = \Psi (t)
     \end{equation}
  at all continuity points of $\Psi (t)$.
\end{theorem}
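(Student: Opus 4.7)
The plan is to argue by compactness and Laplace-transform matching. Write $B_x := B(g;x)$, $\mu_x := \mu(g;x)$, and set
\[ F_x(t) := \frac{1}{B_x^2} \sum_{\substack{p \leq x \\ g(p) \leq t}} \frac{g(p)^2}{p}. \]
Since $g(p) = O(1)$, every $F_x$ is a probability distribution function supported on a common compact interval $[0, M]$. By Helly's selection theorem, any sequence $x_k \to \infty$ admits a subsequence along which $F_{x_k}$ converges weakly to some distribution function $\Psi'$ on $[0, M]$. The claimed convergence at continuity points of $\Psi$ is equivalent to the statement that every such subsequential limit coincides with $\Psi$, so this is what I will prove.

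Consider the Laplace transform $L_x(\sigma) := x^{-1}\sum_{n \leq x} \exp(\sigma(g(n)-\mu_x))$. On one hand, standard Halász / Selberg-Delange machinery --- the same Cram\'er-method apparatus underlying Theorem \ref{MainThm2} --- yields, in a suitable range of small real $\sigma > 0$,
\[ \log L_x(\sigma) = \sum_{p \leq x}\frac{e^{\sigma g(p)} - 1 - \sigma g(p)}{p} + o(B_x^2 \sigma^2) = B_x^2 \int_0^M \frac{e^{\sigma t}-1-\sigma t}{t^2}\, dF_x(t) + o(B_x^2 \sigma^2). \]
Since $(e^{\sigma t}-1-\sigma t)/t^2$ is bounded and continuous on $[0, M]$, weak convergence $F_{x_k} \to \Psi'$ implies that $B_{x_k}^{-2}\log L_{x_k}(\sigma)$ converges to $\int_0^M (e^{\sigma t}-1-\sigma t)/t^2\, d\Psi'(t)$ along the subsequence.

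On the other hand, writing $L_x(\sigma) = \sigma B_x \int_{-\infty}^{\infty} e^{\sigma B_x u}\, \mathbb{P}(Y_x \geq u)\, du$ with $Y_x := (g(n)-\mu_x)/B_x$, one substitutes the hypothesis (\ref{assumption}) into the range $u \in [1, B_x^{1-\varepsilon}]$ and controls the remaining ranges by the classical Kubilius--Selberg CLT (for $|u| = O(1)$) and by a crude Chernoff bound in the super-moderate range $u \gg B_x^{1-\varepsilon}$, using both the trivial bound from $g = O(1)$ on the arithmetic side and the known exponential moments of $\mathcal{Z}_\Psi$ on the probabilistic side. This identifies
\[ L_x(\sigma) = \mathbb{E}\!\left[\exp(\sigma\mathcal{Z}_\Psi(B_x^2))\right](1+o(1)) = \exp\!\Bigl(B_x^2\int_0^\alpha \frac{e^{\sigma t}-1-\sigma t}{t^2}\, d\Psi(t) + o(B_x^2 \sigma^2)\Bigr), \]
for $\sigma$ in a matching range. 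This tail-to-MGF inversion is the main technical obstacle: one must verify, via a careful saddle-point argument, that the moderate-deviation window $\Delta \in [1, B_x^{1-\varepsilon}]$ supplied by the hypothesis governs the Laplace transform at an informative range of $\sigma$, and that the uncontrolled range $\Delta \gg B_x^{1-\varepsilon}$ contributes negligibly.

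Equating the two expressions for $B_x^{-2}\log L_x(\sigma)$ and passing to the limit along the subsequence yields
\[ \int_0^M \frac{e^{\sigma t}-1-\sigma t}{t^2}\, d\Psi'(t) = \int_0^\alpha \frac{e^{\sigma t}-1-\sigma t}{t^2}\, d\Psi(t) \]
for all $\sigma$ in the valid range. Differentiating twice in $\sigma$ under the integral sign (justified by the compact supports of $\Psi$ and $\Psi'$) gives $\int e^{\sigma t}\, d\Psi(t) = \int e^{\sigma t}\, d\Psi'(t)$; uniqueness of the Laplace transform on a compact interval then forces $\Psi = \Psi'$ as distribution functions. Since every subsequential limit of $\{F_x\}$ coincides with $\Psi$, we conclude that $F_x \to \Psi$ at all continuity points of $\Psi$, as required.
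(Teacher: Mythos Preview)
Your compactness-plus-Laplace-transform strategy has a real gap at the step where you pass to the limit in $\sigma$. The saddle point of the tail integral $\int e^{\sigma B_x u}\,\mathbb{P}(Y_x\geq u)\,du$ sits at $u\asymp \sigma B_x$, so the hypothesis \eqref{assumption}, which only covers $u\in[1,B_x^{1-\varepsilon}]$, controls $L_x(\sigma)$ precisely for $\sigma\in[B_x^{-1},B_x^{-\varepsilon}]$. For any \emph{fixed} $\sigma>0$ the saddle lies at $u\asymp B_x$, well outside the window where you know anything; conversely, the admissible $\sigma$ tend to $0$ as $x\to\infty$. Hence your displayed identity
\[
\int_0^M \frac{e^{\sigma t}-1-\sigma t}{t^2}\,d\Psi'(t)=\int_0^\alpha \frac{e^{\sigma t}-1-\sigma t}{t^2}\,d\Psi(t)
\]
is never obtained at a single fixed $\sigma>0$: after letting $x_k\to\infty$ you are only comparing the two sides at $\sigma=0$, where both vanish. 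Weak convergence $F_{x_k}\to\Psi'$ does not let you swap the limit in $x$ with a moving test function $h_{\sigma_x}(t)=(e^{\sigma_x t}-1-\sigma_x t)/t^2$; for small $\sigma_x$ this is essentially the constant $\sigma_x^2/2$, so the integral against $F_{x_k}-\Psi'$ is automatically $o(\sigma_x^2)$ and carries no information.

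The way out is to Taylor-expand in $\sigma$ and match coefficients of $\sigma^k$ one at a time; the $k$-th coefficient is exactly the moment $B_x^{-2}\sum_{p\le x}g(p)^{k}/p$, and matching those is the content of the paper's proof. There, rather than going through $L_x$, one uses the Maciulis/Hwang asymptotic expansions (Lemmas~\ref{MainLemma1} and~\ref{MainLemma2}) to reduce \eqref{assumption} to
\[
\sum_{m\ge0}\frac{\lambda_f(x;m+2)}{m+3}(\Delta/B)^m=\sum_{m\ge0}\frac{\Lambda(\Psi;m+2)}{m+3}(\Delta/B)^m+o(B/\Delta^3),
\]
and then isolates each coefficient by choosing $\Delta=B^{1-2^{-(k+1)}}$ inductively. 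The resulting $\lambda_f(x;k)\to\Lambda(\Psi;k)$ is unwound via the recurrences into moment convergence, and Lemma~\ref{momentlemma} finishes. Your Helly/Laplace packaging could be made to work, but only after this same coefficient-matching step is inserted; as written, the ``differentiate twice and invoke Laplace uniqueness'' conclusion is not justified.
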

The range $1 \leq \Delta \ll B(g;x)^{1 - \varepsilon}$ is optimal for the conclusion of the Theorem. In the range $1 \leq \Delta \ll B(g;x)^{\alpha}$ (with
$\alpha < 1$) the left-hand side of (\ref{assumption}) depends only on
the first $k \leq \lceil \frac{1 + \alpha}{1 - \alpha} \rceil$ moments 
$$
\frac{1}{B(g;x)^{2}}\sum_{p \leq x} \frac{g(p)^{2 + k}}{p}
$$
and thus cannot imply (\ref{conclusion}) (see
\cite{Maks} for more on this).
It is fitting to complete Theorem \ref{MainThm} with a result showing that each of the
laws $\mathcal{Z}_{\Psi} (u)$ does occur as the law of {\tmem{some}} additive
function (and thus (\ref{assumption}) is not vacuous for any $\Psi$). We show this below. \

\begin{theorem}
  \label{MainThm2}
  Let $g$ be a strongly additive function with $0 \leqslant g (p) \leqslant O
  (1)$ and $B (g ; x) \rightarrow \infty$. If there is a distribution function
  $\Psi (t) \neq \delta_0 (t)$ with $\Psi (\alpha) - \Psi (0) = 1$ (for some
  $\alpha > 0$) such that
  \begin{equation}
    \label{assumption2}
    \frac{1}{B (g ; x)^{2}} \sum_{\tmscript{\begin{array}{c}
      p \leqslant x\\
      g (p) \leqslant t
    \end{array}}} \frac{g (p)^2}{p} = \Psi (g ; t) + 
    O \left( \frac{1}{B (g ;
    x)^{2}} \right)
  \end{equation}
  uniformly in $t$, then,
  \[ \frac{1}{x} \cdot \# \left\{ n \leqslant x : \frac{g (n) - \mu (g ; x)}{B
     (g ; x)} \geqslant \Delta \right\} \sim \mathbb{P} \left(
     \mathcal{Z}_{\Psi} (B (g ; x)^{2}) \geqslant \Delta B(g;x) \right) \]
  uniformly in $1 \leqslant \Delta \leqslant o (B (g ; x))$.
\end{theorem}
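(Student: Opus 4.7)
The strategy is Cram\'er's method: compute the moment generating function (MGF) of $g(n) - \mu(g;x)$ averaged over $n \le x$, match it term by term to the MGF of $\mathcal{Z}_{\Psi}(B(g;x)^{2})$, and then convert this matching of MGFs into a matching of tails by an exponential tilt / saddle-point argument. Throughout write $B = B(g;x)$ and $\mu = \mu(g;x)$; the hypotheses $0 \le g(p) \le O(1)$ and $\operatorname{supp}(d\Psi) \subseteq [0,\alpha]$ will be used crucially.

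I would first invoke the Hal\'asz/Kubilius framework (in the refined form of \cite{Maciulis}) applied to the Dirichlet series $\sum_{n} e^{sg(n)} n^{-w}$. Its Euler product together with a standard Perron/contour shift yields
\begin{equation*}
\frac{1}{x}\sum_{n \le x} e^{s(g(n) - \mu)} = \exp\!\Big(\sum_{p \le x}\frac{e^{sg(p)} - 1 - sg(p)}{p}\Big)(1 + o(1))
\end{equation*}
uniformly for $s$ in a suitable neighbourhood of $0$. Partial summation on the inner sum, using hypothesis (\ref{assumption2}), turns it into
\begin{equation*}
B^{2}\int_{0}^{\infty}\frac{e^{sy} - 1 - sy}{y^{2}}\,d\Psi(y) + O(1),
\end{equation*}
the error coming from the $O(1/B^{2})$ in (\ref{assumption2}) multiplied by the bounded total variation of $(e^{sy} - 1 - sy)/y^{2}$ on $[0,\alpha]$. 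Comparing with the definition of $\mathcal{Z}_{\Psi}$ (whose characteristic function extends analytically to real $s$ as $\mathbb{E}[e^{s\mathcal{Z}_{\Psi}(u)}] = \exp(u\int_{0}^{\infty}(e^{sy} - sy - 1)/y^{2}\,d\Psi(y))$) identifies the integer MGF with $\mathbb{E}[e^{s\mathcal{Z}_{\Psi}(B^{2})}]\cdot(1 + o(1))$.

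The last step is Cram\'er's exponential tilt \cite{Cramer}, in the arithmetic form developed by Hal\'asz and \cite{Maciulis}. For $\Delta$ in the range $1 \le \Delta \le o(B)$, the saddle point $s_{*}$ solving $(\log M)'(s_{*}) = \Delta B$ satisfies $s_{*} \asymp \Delta/B \to 0$, so only small values of $s$ are ever needed. Applying the tilt to both sides --- the arithmetic side requiring a local central limit theorem at the tilted measure, supplied by a Tur\'an-Kubilius-type second-moment bound --- yields the same leading asymptotic on both sides; since the MGFs agree, so do the tails. The main obstacle is the first step: the Hal\'asz asymptotic must be valid with an error that remains negligible after multiplication by $e^{s_{*}\Delta B}$, uniformly for $s$ up to $s_{*}$. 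This forces careful Dirichlet-series control with a $g$-dependent twist, and is essentially the technical heart of the Lithuanian-school large deviation theorems; the remaining manipulations are routine consequences.
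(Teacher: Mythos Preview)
Your approach is essentially the same as the paper's: both routes ultimately rest on the Maciulis/Hal\'asz saddle-point large deviation asymptotic for the arithmetic side, the analogous Cram\'er-type asymptotic for $\mathcal{Z}_{\Psi}$, and the conversion of the prime sum $\sum_{p\le x}(e^{sg(p)}-1-sg(p))/p$ into $B^{2}\int(e^{su}-1-su)/u^{2}\,d\Psi(u)$ via hypothesis~(\ref{assumption2}) and integration by parts.

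The one organizational point worth flagging is that the paper does not match MGFs and then apply a common tilt; instead it quotes the two saddle-point tail formulas as black boxes (from Maciulis and from Hwang) and then proves a separate lemma showing that the two saddle points $\eta_{g}(x;\Delta)$ and $\rho_{\Psi}(B;\Delta)$ differ by $o(1/B^{2})$. This is exactly the step your sketch skates over when you say ``since the MGFs agree, so do the tails'': agreement of MGFs up to a bounded factor does not by itself force the tilts to line up, and you will need either this saddle-point closeness lemma or an equivalent argument (e.g.\ tilting both sides at the $\Psi$-saddle $\rho$ and checking that the resulting sub-optimality on the arithmetic side is $o(1)$ in the exponent). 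Once that is in place, the rest of your outline is correct and matches the paper.
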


For each $\Psi (t)$ one can always choose the $g (p)$'s so that (\ref{assumption2}) is
satisfied. It follows that for each $\mathcal{Z}_{\Psi}$ there is an additive
function $g$ whose distribution on the integers is described by the process
$\{\mathcal{Z}_{\Psi} (u)\}_{u \geqslant 0}$. In particular the assumption
(\ref{assumption}) of Theorem \ref{MainThm} is never vacuous.

The conclusion of these two theorems can be subsumed as follows: given an
additive function $g$, its distribution on the primes $\Psi$ and its
distribution on the integers $F$, a related through the following
correspondance,
\[ \left\{ \mathcal{Z}_{\Psi} (u)\}= F \longleftrightarrow \Psi (t) \right. \]
It would be interesting to elucidate this connection further, and prove
theorems that allow for a $\Psi$ that varies with $x$, say,
\[ \Psi_x (t) \assign \frac{1}{B (g ; x)^{2}} \sum_{\tmscript{\begin{array}{c}
     p \leqslant x\\
     g (p) \leqslant t
   \end{array}}} \frac{g (p)^2}{p} \]
Then the goal would be to describe the resulting distribution
$\mathcal{Z}_{\Psi_x} (B (g ; x)^{2})$ on the integers in terms of $\Psi_x (t)$,
and vice-versa. The questions appearing in this paper have been also explored
in a different context in my paper \cite{Maks}

\section{Integers to primes : Proof of Theorem \ref{MainThm}}

The goal of this section is to prove Theorem \ref{MainThm}. Throughout we define
$\mathcal{D}_f (x ; \Delta)$ by
\[ \mathcal{D}_f (x ; \Delta) \assign \frac{1}{x} \cdot \# \left\{ n
   \leqslant x : \frac{f (n) - \mu (f ; x)}{B (f ; x)} \geqslant \Delta
   \right\} \]

\subsection{Large deviations for $\mathcal{D}_f (x ; \Delta)$ and
$\mathbb{P}(\mathcal{Z}_{\Psi} (x) \geqslant t)$}

We will usually need to ``adjust'' some of the results taken from the
literature. Our main tool will be Lagrange inversion.

\begin{lemma}
  Let $C > 0$ be given. Let $f (z)$ be analytic in $|z| \leqslant C$. Suppose
  that $f' (z) \neq 0$ for all $|z| \leqslant C$ and that in $|z| \leqslant C$
  the function $f (z)$ vanishes only at the point $z = 0$. Then, the function
  $g$ defined implicitly by $f (g (z)) = z$ is analytic in a neighborhood of
  0 and its n-th coefficient $a_n$ in the Taylor expansion
  about 0 is given by
  \[ a_n = \frac{1}{2 \pi i} \oint_{\gamma} \frac{\zeta f' (\zeta)}{f
     (\zeta)^{n + 1}} \mathd \zeta \]
  where $\gamma$ is a circle about $0$, contained in $| \zeta | \leqslant C$.
  The function $g (z)$ is given by
  \begin{eqnarray*}
    g (z) & = & \frac{1}{2 \pi i} \oint_{\gamma} \frac{\zeta f' (\zeta)}{f
    (\zeta) - z} \cdot \mathd \zeta
  \end{eqnarray*}
  and again $\gamma$ is a circle about $0$, contained in $| \zeta | \leqslant
  C$. 
\end{lemma}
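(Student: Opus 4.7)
\emph{Proof sketch.} First I would establish the existence of $g$: the hypothesis that $f$ vanishes only at $0$ gives $f(0)=0$, and combined with $f'(0)\neq 0$ the analytic inverse function theorem yields an analytic function $g$ on a neighborhood of $0$ with $f(g(z))=z$ and $g(0)=0$.

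To derive the coefficient formula, I would combine Cauchy's integral formula with a change of variables, which is the standard argument behind Lagrange inversion. Pick $\rho>0$ small enough that $g$ is analytic and univalent on $|w|\leq\rho$; by Cauchy,
\[ a_n \;=\; \frac{1}{2\pi i}\oint_{|w|=\rho} \frac{g(w)}{w^{n+1}}\,\mathd w. \]
Substitute $w=f(\zeta)$, so that $\mathd w = f'(\zeta)\,\mathd\zeta$ and $g(w)=\zeta$ along the preimage contour $\gamma_0 := g(\{|w|=\rho\})$ (a simple loop around $0$, by the local biholomorphism provided by the inverse function theorem), to obtain
\[ a_n \;=\; \frac{1}{2\pi i}\oint_{\gamma_0} \frac{\zeta f'(\zeta)}{f(\zeta)^{n+1}}\,\mathd\zeta. \]
To upgrade $\gamma_0$ to an arbitrary circle $\gamma\subset\{|\zeta|\leq C\}$ around $0$, apply Cauchy's theorem: the integrand $\zeta f'(\zeta)/f(\zeta)^{n+1}$ is analytic on $|\zeta|\leq C$ away from zeros of $f$, and by hypothesis $\zeta=0$ is the only such zero, so the deformation through the zero-free annulus between $\gamma_0$ and $\gamma$ is legitimate.

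For the integral representation of $g(z)$, I would substitute the coefficient formula into the Taylor expansion and interchange sum and integral. Since $\gamma$ avoids the unique zero of $f$ in $|\zeta|\leq C$, the quantity $m := \min_{\zeta\in\gamma}|f(\zeta)|$ is strictly positive; for $|z|<m$ the geometric series $\sum_{n\geq 0} z^n/f(\zeta)^{n+1}$ converges uniformly in $\zeta\in\gamma$ to $1/(f(\zeta)-z)$, whence
\[ g(z) \;=\; \sum_{n\geq 0} a_n z^n \;=\; \frac{1}{2\pi i}\oint_\gamma \frac{\zeta f'(\zeta)}{f(\zeta)-z}\,\mathd\zeta. \]
The $n=0$ contribution is harmless because $\zeta f'(\zeta)/f(\zeta)$ extends analytically to $\zeta=0$ (as $f$ has a simple zero there), so its integral around $\gamma$ vanishes by Cauchy. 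The only delicate point, and so the main potential obstacle, is justifying the contour deformation from the small preimage loop $\gamma_0$ to a general circle $\gamma\subset\{|\zeta|\leq C\}$. Both standing hypotheses enter here: $f$ vanishing only at $\zeta=0$ kills additional poles of the integrand in the annulus, while $f'\neq 0$ on $|\zeta|\leq C$ rules out critical-point pathologies that could obstruct the local inverses and the change of variables. Beyond this bookkeeping, the argument is a direct application of the classical Lagrange inversion method.
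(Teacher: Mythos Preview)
The paper does not actually supply a proof of this lemma; it is stated as a standard tool (Lagrange inversion) and used immediately in the proofs of the subsequent lemmas. Your argument is correct and is precisely the classical derivation: Cauchy's formula for the Taylor coefficients of $g$, the substitution $w=f(\zeta)$, deformation of the contour using the hypothesis that $f$ has no zeros in $|\zeta|\leqslant C$ other than at the origin, and finally summing the geometric series to recover the closed integral form of $g(z)$. Your handling of the $n=0$ term is also right: since $f$ has a simple zero at $0$, the function $\zeta f'(\zeta)/f(\zeta)$ is analytic there and the integral vanishes, matching $a_0=g(0)=0$. There is nothing to add.
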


The desired asymptotic for $\mathcal{D}_f (x ; \Delta)$ is contained
in Maciulis's paper (\cite{Maciulis}, lemma 1A). 

\begin{lemma}
  \label{MainLemma1}
  Let $f$ be an additive function. Suppose that $0 \leqslant f (p) \leqslant O
  (1)$ and that $B (f ; x) \longrightarrow \infty$ (or equivalently $\sigma (f
  ; x) \rightarrow \infty$). Let $B^{2} = B (f ; x)^{2}$. Uniformly in the range
  $1 \leqslant \Delta \leqslant o (\sigma (f ; x))$ we have
  \[ \mathcal{D}_f (x ; \Delta) \sim \exp \left( - \frac{\Delta^3}{B}
     \sum_{k = 0}^{\infty} \frac{\lambda_f (x ; k + 2)}{k + 3} \cdot \left(
     \Delta / B \right)^k \right) \int_{\Delta}^{\infty} e^{- u^2 / 2} \cdot
     \frac{\mathd u}{\sqrt[]{2 \pi}} \]
  where the coefficients $\lambda_f (x ; k)$ are defined recursively by
  $\lambda_f (x ; 0) = 0, \lambda_f (x ; 1) = 1$ and
  \[ \lambda_f (x ; j) = - \sum_{i = 2}^j \frac{1}{i!} \cdot \left(
     \frac{1}{B (f ; x)^{2}} \sum_{p \leqslant x} \frac{f (p)^{i + 1}}{p}
     \right) \sum_{k_1 + \ldots + k_i = j} \lambda_f (x ; k_1) \cdot \ldots
     \cdot \lambda_f (x ; k_i) \]
  Further there is a constant $C = C (f)$ such that $| \lambda_f (x ; k) |
  \leqslant C^k$ for all $k, x \geqslant 1$.
\end{lemma}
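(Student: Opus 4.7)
The plan is to follow Cramér's saddle-point method applied to the Kubilius probabilistic model of $f$, combined with Lagrange inversion (the previous lemma) to read off the coefficients $\lambda_f(x;k)$. Introduce independent Bernoullis $Y_p$ with $\mathbb{P}(Y_p=1)=1/p$ and set $X_f \assign \sum_{p\leqslant x} f(p)(Y_p - 1/p)$; by Kubilius's inequality, $\mathcal{D}_f(x;\Delta)$ agrees asymptotically with $\mathbb{P}(X_f \geqslant \Delta B)$ in the required range. The cumulant generating function
\[
K(\theta) \;=\; \sum_{p\leqslant x}\Bigl(\log\bigl(1 + \tfrac{e^{\theta f(p)}-1}{p}\bigr) - \tfrac{\theta f(p)}{p}\Bigr)
\]
is entire in $\theta$, with $K(0)=K'(0)=0$ and $K''(0)=B^2$.

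Next I would apply the standard Cramér tilting: choose the saddle point $\theta^{*}$ solving $K'(\theta^{*})=\Delta B$, so that shifting the measure by $e^{\theta^{*}X_f}/\mathbb{E}[e^{\theta^{*}X_f}]$ transforms the large-deviation event into a local central limit problem. This yields
\[
\mathbb{P}(X_f \geqslant \Delta B) \;\sim\; \frac{\exp\bigl(K(\theta^{*}) - \theta^{*}\Delta B\bigr)}{\theta^{*}\sqrt{2\pi K''(\theta^{*})}},
\]
uniformly for $1\leqslant\Delta\leqslant o(\sigma(f;x))$, which is precisely the content of Maciulis's Lemma~1A. It remains to recast $K(\theta^{*})-\theta^{*}\Delta B$ in the form given in the statement. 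Writing $\rho_k\assign B^{-2}\sum_{p}f(p)^k/p$, the saddle equation becomes
\[
\Delta/B \;=\; \theta^{*} + \sum_{j\geqslant 2}\frac{\rho_{j+1}}{j!}(\theta^{*})^{j},
\]
so by Lemma~1 (Lagrange inversion) applied to $F(z)\assign z+\sum_{j\geqslant 2}\frac{\rho_{j+1}}{j!}z^{j}$, one gets
\[
\theta^{*} \;=\; \sum_{k\geqslant 1}\lambda_f(x;k)\,(\Delta/B)^{k},
\]
where the coefficients $\lambda_f(x;k)$ satisfy exactly the recursion in the lemma (this is the standard combinatorial identity for the compositional inverse of $F$). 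Substituting this expansion into $K(\theta^{*})-\theta^{*}\Delta B$ and collecting powers of $\Delta/B$ — a routine but bookkeeping-heavy calculation — produces the stated exponent $-(\Delta^{3}/B)\sum_{k\geqslant 0}\frac{\lambda_f(x;k+2)}{k+3}(\Delta/B)^{k}$, while the Gaussian prefactor is absorbed into $\int_\Delta^\infty e^{-u^2/2}\,du/\sqrt{2\pi}$ after noting $\theta^{*}\sqrt{K''(\theta^{*})}\sim \Delta$.

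For the bound $|\lambda_f(x;k)|\leqslant C^{k}$, I would use the contour representation in Lemma~1. Since $0\leqslant f(p)\leqslant M$ for some $M=M(f)$, one has $\rho_k \leqslant M^{k-2}$, so $F(z)$ is entire and the series $\sum_{j\geqslant 2}\frac{\rho_{j+1}}{j!}z^{j}$ is dominated by $M^{-1}(e^{M|z|}-1-M|z|)$. In particular $F'(z)\neq 0$ and $F(z)\neq 0$ on a fixed disk $|z|\leqslant r_0$ depending only on $M$, so Lemma~1 gives a compact contour $\gamma$ on which $|F(\zeta)|\geqslant c>0$, whence $|\lambda_f(x;k)|\leqslant (1/c)^{k+1}\cdot \sup_\gamma|\zeta F'(\zeta)|\cdot|\gamma| \leqslant C^{k}$. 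The main obstacle in this whole scheme is not the Lagrange inversion step (which is essentially algebraic and uniform in $x$) but rather guaranteeing the saddle-point asymptotic uniformly in the stated range of $\Delta$: this requires careful control over the error in tilting and the local CLT at the tilted mean, and is exactly where Maciulis's Lemma~1A does the heavy lifting.
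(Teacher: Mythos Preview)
Your proposal is correct and matches the paper's approach almost exactly: the paper simply cites Maciulis's Lemma~1A for the asymptotic (your saddle-point sketch is extra exposition of what Maciulis proves, not a different route), and then proves only the bound $|\lambda_f(x;k)|\leqslant C^k$ by observing that the generating function $\sum_k\lambda_f(x;k)z^k$ is the compositional inverse of your $F(z)=B^{-2}\sum_{p\leqslant x}f(p)(e^{f(p)z}-1)/p$ and applying Lagrange inversion on a small circle where $F\gg 1$ and $F'\ll 1$ --- precisely your final paragraph.
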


\begin{proof}
  Except the bound $| \lambda_f (x ; k) | \leqslant C^k$, the totality of the
  lemma is contained in Maciulis's paper (\cite{Maciulis}, lemma 1A). Let us prove
  that $| \lambda_f
  (x ; k) | \leqslant C^k$ for a suitable positive constant $C > 0$. To do so,
  we consider the power series
  \begin{eqnarray*}
    \mathcal{G}_f \left( x ; z \right) & = & \sum_{j \geqslant 2} \lambda_f
    \left( x ; j \right) \cdot z^j + z
  \end{eqnarray*}
  Let us look in more detail at the sum over $j \geqslant 2$. By making use of
  the recurrence relation for $\lambda_f (x ; j)$ we see that the sum in
  question equals to
  \begin{eqnarray*}
    & = & - \sum_{j \geqslant 2} \sum_{i = 2}^j \frac{1}{i!} \cdot \left(
    \frac{1}{B \left( f ; x \right)^{2}} \sum_{p \leqslant x} \frac{f (p)^{i +
    1}}{p} \right) \sum_{k_1 + \ldots + k_i = j} \lambda_f \left( x ; k_1
    \right) \cdot \ldots \cdot \lambda_f \left( x ; k_i \right) \cdot z^j\\
    & = & - \sum_{i \geqslant 2} \frac{1}{i!} \cdot \left( \frac{1}{B
    \left( f ; x \right)^{2}} \sum_{p \leqslant x} \frac{f (p)^{i + 1}}{p} \right)
    \cdot \left( \sum_{k \geqslant 0} \lambda_f \left( x ; k \right) z^k
    \right)^i\\
    & = & - \frac{1}{B (f ; x)^{2}} \sum_{p \leqslant x} \frac{f (p)}{p}
    \sum_{i \geqslant 2} \frac{1}{i!} \cdot f (p)^i \cdot \mathcal{G}_f \left(
    x ; z \right)^i\\
    & = & - \frac{1}{B \left( f ; x \right)^{2}} \sum_{p \leqslant x} \frac{f
    (p)}{p} \cdot \left( e^{f (p)\mathcal{G}_f \left( x ; z) \right.} - f
    (p)\mathcal{G}_f (x ; z) - 1 \right)_{}
  \end{eqnarray*}
  The above calculation reveals that $\mathcal{F}_f (x ; \mathcal{G}_f (x ;
  z)) = z$ where $\mathcal{F}_f (x ; z)$ is defined by
  \begin{eqnarray*}
    \mathcal{F}_f (x ; z) & = & z + \frac{1}{B (f ; x)^{2}} \sum_{p \leqslant x}
    \frac{f (p)}{p} \cdot \left( e^{f (p) z} - f (p) z - 1 \right)\\
    & = & \frac{1}{B (f ; x)^{2}} \sum_{p \leqslant x} \frac{f (p)}{p} \cdot
    \left( e^{f (p) z} - 1 \right)
  \end{eqnarray*}
  Since all the $f (p)$ are bounded by some $M \geqslant 0$, we have
  $\mathcal{F}_f (x ; z) = z + O \left( Mz^2 \right)$ when $z$ is in a
  neighborhood of $0$, furthermore the implicit constant in the big $O$,
  depends only on $M$. Therefore $\mathcal{F}_f (x ; z) \gg 1$ for $z$ in the
  annulus $B / 2 \leqslant |z| \leqslant B$, where $B$ is a sufficiently small
  constant, depending only on $M$. Let us also note the derivative
  \[ \frac{\mathd}{\mathd z} \cdot \mathcal{F}_f (x ; z) = \frac{1}{B (f ;
     x)^{2}} \sum_{p \leqslant x} \frac{f (p)^2}{p} \cdot e^{f (p) z} \]
  doesn't vanish and is bounded uniformly in $|z| \leqslant B$ for $B$
  sufficiently small, depending only on $M$. Hence by Lagrange inversion the
  function $\mathcal{G}_f (x ; z)$ is for each $x \geqslant 1$ analytic in the
  neighborhood $|z| \leqslant B$ of $0$, and in addition, its coefficients
  $\lambda_f (x ; k)$ are given by
  \begin{eqnarray*}
    \lambda_f (x ; k) & = & \frac{1}{2 \pi i} \oint_{| \zeta | = B / 2}
    \frac{\zeta \cdot (\mathd / \mathd \zeta)\mathcal{F}_f (x ;
    \zeta)}{\mathcal{F}_f (x ; \zeta)^{k + 1}} \mathd \zeta
  \end{eqnarray*}
  However we know that $\mathcal{F}_f (x ; \zeta) \gg 1$ and that $(d / d
  \zeta)\mathcal{G}_f (x ; \zeta) \ll 1$ on the boundary $| \zeta | = B / 2$
  with the implicit constant depending only on $M$. Therefore, the integral is
  bounded by $C^k$, for some $C > 0$ depending only on $M$. Hence $| \lambda_f
  (x ; k) | \leqslant C^k$.
\end{proof}

From Hwang's paper \cite{Hwang} -- itself heavily based on the same methods as
used by Maciulis \cite{Maciulis} -- we obtain the next lemma. 
Since Hwang's lemma
is not exactly what is stated below, we include the deduction.

\begin{lemma}
  \label{MainLemma2}
  Let $\Psi$ be a distribution function. Suppose that there is an $\alpha > 0$
  such that $\Psi (\alpha) - \Psi (0) = 1$. Let $B^2 = B^2 (x) \rightarrow
  \infty$ be some function tending to infinity. Uniformly in the range $1
  \leqslant \Delta \leqslant o (B^{})$ we have
  \[ \mathbb{P} \left( \mathcal{Z}_{\Psi} \left( B^2 \right) \geqslant \Delta
     B \right) \sim \exp \left( - \frac{\Delta^3}{B} \sum_{k = 0}^{\infty}
     \frac{\Lambda (\Psi ; k + 2)}{k + 3} \cdot \left( \Delta / B \right)^k
     \right) \int_{\Delta}^{\infty} e^{- u^2 / 2} \cdot \frac{\mathd
     u}{\sqrt[]{2 \pi}} \]
  the coefficients $\Lambda (\Psi ; k)$ satisfy $\Lambda (\Psi ; 0) = 0,
  \Lambda (\Psi ; 1) = 1$ and the recurrence relation
  \[ \Lambda (\Psi ; j) = - \sum_{2 \leqslant \ell \leqslant j} \frac{1}{\ell
     !} \int_{\mathbb{R}} t^{\ell - 1} \mathd \Psi (f ; t) \sum_{k_1 + \ldots
     + k_{\ell} = j} \Lambda (\Psi ; k_1) \cdot \ldots \cdot \Lambda (\Psi ;
     k_{\ell}) \]
  Furthermore there is a constant $C = C (\Psi) > 0$ such that $| \Lambda
  (\Psi ; k) | \leqslant C^k$ for $k \geqslant 1$.
\end{lemma}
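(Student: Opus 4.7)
The approach is to adapt Hwang's large-deviation machinery \cite{Hwang} to the Levy-process setting. Since $\mathcal{Z}_\Psi(B^2)$ is infinitely divisible with cumulant generating function $B^2\phi(t)$, where
\[
\phi(t) \assign \int_0^\infty \frac{e^{tx}-tx-1}{x^2}\, \mathd \Psi(x) = \sum_{i\geq 2} \frac{t^i}{i!}\int_0^\infty x^{i-2}\,\mathd \Psi(x),
\]
I would apply a Cramer tilt directly. Because $\Psi$ is a probability distribution supported on $[0,\alpha]$, each moment integral above is bounded by $\alpha^{i-2}$, so $\phi$ is entire; the second cumulant of $\mathcal{Z}_\Psi(B^2)$ equals $B^2$, giving the correct Gaussian scale.

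The saddle point $t$ is determined by $B^2\phi'(t)=\Delta B$, i.e.\ $\mathcal{F}_\Psi(t)=\Delta/B$, where
\[
\mathcal{F}_\Psi(z) \assign \phi'(z) = \int_0^\infty \frac{e^{xz}-1}{x}\,\mathd \Psi(x) = z + O_\alpha(z^2).
\]
Since $\mathcal{F}_\Psi$ is entire with $\mathcal{F}_\Psi'(0)=1$, Lemma 1 produces an analytic inverse $\mathcal{G}_\Psi(w)=\sum_{j\geq 1}\Lambda(\Psi;j)w^j$ on a disk $|w|\leq B_0$ whose radius depends only on $\alpha$. In the range $1\leq \Delta \leq o(B)$ the saddle $t=\mathcal{G}_\Psi(\Delta/B)$ lies safely inside this disk. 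Plugging into the Cramer / saddle-point evaluation yields the asymptotic: using $\phi(t)-t\mathcal{F}_\Psi(t)=-\int_0^{\Delta/B}\mathcal{G}_\Psi(w)\,\mathd w$, the exponent $B^2\phi(t)-\Delta B t$ simplifies to
\[
-\frac{\Delta^2}{2}-\frac{\Delta^3}{B}\sum_{k\geq 0}\frac{\Lambda(\Psi;k+2)}{k+3}\left(\frac{\Delta}{B}\right)^k,
\]
while the Gaussian prefactor $1/(t\sqrt{2\pi B^2\phi''(t)})$ matches $1/(\Delta\sqrt{2\pi})\sim e^{\Delta^2/2}\int_\Delta^\infty e^{-u^2/2}\,\mathd u/\sqrt{2\pi}$, so that the two combine to produce the claimed form.

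To extract the recurrence, I would equate the coefficient of $z^j$ ($j\geq 2$) in $\mathcal{F}_\Psi(\mathcal{G}_\Psi(z))=z$: writing $\mathcal{F}_\Psi(y)=y+\sum_{\ell\geq 2}\frac{y^\ell}{\ell!}\int_0^\infty x^{\ell-1}\,\mathd \Psi(x)$ and substituting $y=\mathcal{G}_\Psi(z)$, the vanishing of the $z^j$-coefficient gives precisely the stated relation. For the bound $|\Lambda(\Psi;k)|\leq C^k$ I would mimic the Lagrange-inversion argument from the proof of Lemma \ref{MainLemma1}: on $|\zeta|=B_0/2$ one checks $|\mathcal{F}_\Psi(\zeta)|\gg 1$ and $|\mathcal{F}_\Psi'(\zeta)|\ll 1$ with implicit constants depending only on $\alpha$, so that the Lagrange representation
\[
\Lambda(\Psi;k) = \frac{1}{2\pi\mathi}\oint_{|\zeta|=B_0/2}\frac{\zeta\,\mathcal{F}_\Psi'(\zeta)}{\mathcal{F}_\Psi(\zeta)^{k+1}}\,\mathd \zeta
\]
is $O(C^k)$ for some $C=C(\alpha)$.

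The main obstacle will be carrying out the translation from Hwang's framework, which is phrased for sums of combinatorial random variables with somewhat different normalizations, into the continuous-parameter Levy process $\mathcal{Z}_\Psi$ while preserving uniformity in $\Delta$ up to $o(B)$. Near the upper endpoint the saddle $\mathcal{G}_\Psi(\Delta/B)$ approaches the edge of its disk of analyticity, so one must track the Cramer error term carefully enough to absorb it into the ``$\sim$''. Once the saddle-point step is justified, the identification of the cubic exponential with the Lagrange-inverted series and the exponential bound on $\Lambda(\Psi;k)$ are the bookkeeping described above.
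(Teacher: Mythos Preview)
Your proposal is correct and, at the level of the underlying mathematics, coincides with the paper's argument: both identify the generating function $\sum_{k\ge 0}\Lambda(\Psi;k)\xi^k$ with $(u')^{-1}(\xi)$ (your $\mathcal G_\Psi$), read off the recurrence from $u'\bigl((u')^{-1}(\xi)\bigr)=\xi$, and bound $|\Lambda(\Psi;k)|\le C^k$ via the Lagrange contour integral. The one organizational difference is that the paper does not redo the Cram\'er/saddle-point step you outline; it simply quotes Hwang's theorem \cite{Hwang} as a black box, obtaining the asymptotic together with the Cauchy-integral formula $\Lambda(\Psi;k+2)/(k+3)=-\frac{1}{2\pi i}\oint \zeta u''(\zeta)\,u'(\zeta)^{-k-3}\,d\zeta$, and then sums and differentiates these integrals to recover $(u')^{-1}(\xi)$. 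So your route is slightly more self-contained (you build the coefficients from Lagrange inversion first and then verify the asymptotic), while the paper's is shorter because the uniformity in $1\le\Delta\le o(B)$---the very obstacle you flag---is delegated entirely to Hwang's result.
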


\begin{proof}
  Let $u (z) = u (\Psi ; z) = \int_{\mathbb{R}} (e^{zt} - zt - 1) \cdot t^{-
  2} \mathd \Psi (t)$. Note that $u (z)$ is entire because $\Psi (t)$ is
  supported on a compact interval. By Hwang's theorem 1 (see \cite{Hwang})
  \[ \mathbb{P} \left( \mathcal{Z}_{\Psi} \left( B^2 \right) \geqslant \Delta
     B \right) \sim \exp \left( - B^2 \sum_{k \geqslant 0} \frac{\Lambda (\Psi
     ; k + 2)}{k + 3} \cdot (\Delta / B)^k \right) \int_{\Delta}^{\infty} e^{-
     u^2 / 2} \cdot \frac{\mathd u}{\sqrt[]{2 \pi}} \]
  uniformly in $1 \leqslant \Delta \leqslant o (B (x))$ with the coefficients
  $\Lambda (\Psi ; k)$ given by $\Lambda (\Psi ; 0) = 0$, $\Lambda (\Psi ; 1)
  = 1$ and for $k \geqslant 0$,
  \begin{eqnarray*}
    \frac{\Lambda (\Psi ; k + 2)}{k + 3} & = & - \frac{1}{k + 3} \cdot
    \frac{1}{2 \pi i} \oint_{\gamma} u'' (z) \cdot \left( \frac{u' (z)}{z}
    \right)^{- k - 3} \cdot \frac{\mathd z}{z^{k + 2}}\\
    & = & - \frac{1}{k + 3} \oint_{\gamma} \frac{zu'' (z)}{u' (z)^{k + 3}}
    \cdot \frac{\mathd z}{2 \pi i}
  \end{eqnarray*}
  

  (we set $m = k+3$, $k \geqslant 0$, $q_m = \Lambda(\Psi;k+2)/(k+3)$ in
  equation $(7)$ of \cite{Hwang} and rewrite equation $(8)$ in \cite{Hwang}
  in terms of Cauchy's formula).  
  Here $\gamma$ is a small circle around the origin. First let us show that
  the coefficients $\Lambda (\Psi ; k + 2)$ are bounded by $C^k$ for a
  sufficiently large (but fixed) $C > 0$. Around $z = 0$ we have $u' (z) =
  \int_{\mathbb{R}} (e^{zt} - 1) \cdot t^{- 1} \mathd \Psi (t) = z + O
  (z^2)$. Therefore if we choose the circle $\gamma$ to have sufficiently
  small radius then $u' (z) \gg 1$ for $z$ on $\gamma$. Hence looking at the
  previous equation, the Cauchy integral defining $\Lambda (\Psi ; k + 2) /
  k + 3$ is bounded in modulus by $\ll C^{k + 3}$ for some constant $C > 0$.
  The bound $| \Lambda (\Psi ; k) | \ll C^k$ ensues (perhaps with a larger $C$
  than earlier). Our goal now is to show that $\Lambda (\Psi ; k)$ satisfies
  the recurrence relation given in the statement of the lemma. Multiplying by
  $\xi^{k + 3}$ and summing over $k \geqslant 0$ we obtain
  \begin{eqnarray*}
    &  & \sum_{k \geqslant 0} \frac{\Lambda (\Psi ; k + 2)}{k + 3} \cdot
    \xi^{k + 3} \text{ } = \text{ } - \sum_{k \geqslant 0} \frac{\xi^{k +
    3}}{k + 3} \oint_{\gamma} \frac{zu'' (z)}{u' (z)^{k + 3}} \cdot
    \frac{\mathd z}{2 \pi i}\\
    & = & - \oint_{\gamma} zu'' (z) \sum_{k \geqslant 0} \frac{1}{k + 3}
    \cdot \left( \frac{\xi}{u' (z)} \right)^{k + 3} \cdot \frac{\mathd z}{2
    \pi i}\\
    & = & \oint_{\gamma} zu'' (z) \cdot \left( - \frac{\xi}{u' (z)} -
    \frac{1}{2} \cdot \frac{\xi^2}{u' (z)^2} - \log \left( 1 - \frac{\xi}{u'
    (z)} \right) \right) \cdot \frac{\mathd z}{2 \pi i}
  \end{eqnarray*}
  Differentiating with respect to $\xi$ on both sides yields
  \begin{eqnarray*}
    \mathcal{G}_{\Psi} (\xi) & = & \sum_{k \geqslant 0} \Lambda (\Psi ; k + 2)
    \xi^{k + 2} \text{ } = \text{ } \oint \left( \frac{zu'' (z)}{u' (z) - \xi}
    - \frac{zu'' (z)}{u' (z)} - \frac{\xi zu'' (z)}{u' (z)^2} \right) 
    \frac{\mathd z}{2 \pi i}
  \end{eqnarray*}
  By Lagrange inversion this last integral is equal to $(u')^{- 1} (\xi) - 0 -
  \xi$ where $(u')^{- 1}$ denotes the inverse function to $u' (z)$. Hence
  \begin{eqnarray}
    \label{uinverse}
    \sum_{k \geqslant 0} \Lambda (\Psi ; k) \xi^k & = & (u')^{- 1} (\xi) 
  \end{eqnarray}
  Let us compose this with $u' (\cdot)$ on both sides and compute the
  resulting left hand side. First of all we expand $u' (z)$ in a power series.
  This gives
  \[ u' (z) = \int_{\mathbb{R}} \frac{e^{zt} - 1}{t} \mathd \Psi (t) =
     \sum_{\ell \geqslant 1} \frac{1}{\ell !} \int_{\mathbb{R}} t^{\ell - 1}
     \mathd \Psi (t) \cdot z^{\ell} \]
  Therefore, composing (\ref{uinverse}) with $u' (\cdot)$ yields
  \begin{eqnarray*}
    \xi & = & u' \left( \sum_{k \geqslant 0} \Lambda (\Psi ; k) \xi^k \right)
    \text{ } = \text{ } \sum_{\ell \geqslant 1} \frac{1}{\ell !}
    \int_{\mathbb{R}} t^{\ell - 1} \mathd \Psi (t) \cdot \left( \sum_{k
    \geqslant 0} \Lambda (\Psi ; k) \xi^k \right)^{\ell}\\
    & = & \sum_{\ell \geqslant 1} \frac{1}{\ell !} \int_{\mathbb{R}} t^{\ell
    - 1} \mathd \Psi (t) \cdot \left( \sum_{k_1, \ldots, k_{\ell} \geqslant 1}
    \Lambda (\Psi ; k_1) \cdot \ldots \cdot \Lambda (\Psi ; k_{\ell}) \cdot
    \xi^{k_1 + \ldots + k_{\ell}} \right)\\
    & = & \sum_{m \geqslant 1} \left( \sum_{1 \leqslant \ell \leqslant m}
    \frac{1}{\ell !} \int_{\mathbb{R}} t^{\ell - 1} \mathd \Psi (t) \sum_{k_1
    + \ldots + k_{\ell} = m} \Lambda (\Psi ; k_1) \cdot \ldots \cdot \Lambda
    (\Psi ; k_{\ell}) \right) \cdot \xi^m
  \end{eqnarray*}
  Thus the first coefficient $\Lambda (\Psi ; 1)$ is equal to 1, as desired,
  while for the terms $m \geqslant 2$ we have
  \[ \sum_{1 \leqslant \ell \leqslant m} \frac{1}{\ell !} \int_{\mathbb{R}}
     t^{\ell - 1} \mathd \Psi (t) \sum_{k_1 + \ldots + k_{\ell} = m} \Lambda
     (\Psi ; k_1) \cdot \ldots \cdot \Lambda (\Psi ; k_{\ell}) = 0 \]
  The first term $\ell = 1$ is equal to $\Lambda (\Psi ; m)$. It suffice to
  move it on the right hand side of the equation, to obtain the desired
  recurrence relation. 
\end{proof}

Finally we will need one last result ``from the literature''. Namely a weak
form of the method of moments. For a proof we refer the reader to Gut's book
\cite{Gut}, p. 237. (Note that the next lemma follows from the result in
\cite{Gut} because in our case the random variables are positive, and bounded, 
in particular their distribution is determined uniquely by their moments).

\begin{lemma}
  \label{momentlemma}
  Let $\Psi$ be a distribution function. Suppose that there is an $a > 0$ such
  that $\Psi (a) - \Psi (0) = 1$. Let $F (x ; t)$ be a sequence of
  distribution functions, one for each $x > 0$. If for each $k \geqslant 0$,
  \[ \int_{\mathbb{R}} t^k \tmop{dF} (x ; t) \text{ } \longrightarrow
     \text{\, } \int_{\mathbb{R}} t^k d \Psi (t) \]
  Then $F (x ; t) \longrightarrow \Psi (t)$ at all continuity points $t$ of
  $\Psi (t)$. 
\end{lemma}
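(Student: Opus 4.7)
The plan is to reduce the assertion to three standard facts: tightness of $\{F(x;\cdot)\}$, Helly's selection theorem, and the uniqueness of the Hausdorff moment problem for a compactly supported $\Psi$. First I would establish tightness. Since $\Psi(a) - \Psi(0) = 1$, the support of $\Psi$ lies in $[0,a]$, so all its moments satisfy $\int_{\mathbb{R}} t^{2k}\, \mathrm{d}\Psi(t) \leqslant a^{2k}$. The hypothesis applied with $k=2$ gives $\sup_{x \geqslant x_0} \int t^2\, \mathrm{d}F(x;t) < \infty$, and by Markov's inequality this yields $\sup_x \bigl(F(x;-M) + 1 - F(x;M)\bigr) \to 0$ as $M \to \infty$, which is tightness.

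Next, by Helly's selection theorem every subsequence of $\{F(x;\cdot)\}$ contains a further subsequence $\{F(x_n;\cdot)\}$ converging weakly (and pointwise at continuity points) to some sub-probability distribution $G$; tightness upgrades $G$ to a genuine distribution function. I would then show $G = \Psi$. Moments pass to the weak limit provided the family $\{|t|^k\}$ is uniformly integrable under $F(x_n;\cdot)$, which follows from the hypothesis with exponent $2k$: $\sup_n \int t^{2k}\, \mathrm{d}F(x_n;t) < \infty$, so by the usual truncation argument
\[
\int_{\mathbb{R}} t^k\, \mathrm{d}G(t) = \lim_{n \to \infty} \int_{\mathbb{R}} t^k\, \mathrm{d}F(x_n;t) = \int_{\mathbb{R}} t^k\, \mathrm{d}\Psi(t)
\]
for every $k \geqslant 0$.

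Finally, because $\Psi$ is compactly supported its moment sequence grows at most geometrically and therefore determines $\Psi$ uniquely (Hausdorff moment problem; equivalently, the characteristic function of $\Psi$ is entire). Hence $G = \Psi$, and since every subsequential weak limit coincides with $\Psi$, the full sequence $F(x;\cdot)$ converges weakly to $\Psi$, yielding pointwise convergence at every continuity point of $\Psi$. The one delicate point is uniform integrability: \emph{a priori} $F(x;\cdot)$ may have mass on the negative axis or at large positive values, and one must invoke convergence of moments of arbitrarily high order (not just of order $k$) to rule out mass escaping to infinity when identifying the moments of $G$ with those of $\Psi$.
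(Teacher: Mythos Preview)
Your argument is correct and follows the standard route (tightness, Helly, uniform integrability, moment-determinacy of a compactly supported law). The paper itself does not supply a proof of this lemma at all: it simply cites Gut's book and remarks that the result applies because $\Psi$ is supported on a bounded interval of $[0,\infty)$ and is therefore determined by its moments---precisely the observation your final step uses. So you have written out in full what the paper leaves to a reference; there is no alternative argument in the paper to compare against.
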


\subsection{Proof of Theorem \ref{MainThm}}

\begin{proof}[Proof of Theorem \ref{MainThm}]
  By assumptions $\mathcal{D}_f (x ; \Delta) \sim
  \mathbb{P}(\mathcal{Z}_{\Psi} (B^2 (f ; x)) \geqslant \Delta B (f
  ; x))$ holds throughout $1 \leqslant \Delta \leqslant o (B (f ; x))$.

  The proof is in three steps. Retaining the notation of Lemma \ref{MainLemma1} and Lemma
  \ref{MainLemma2} we first show that $\lambda_f (k ; x) \longrightarrow \Lambda (\Psi ;
  k)$ for all $k \geqslant 2$ (for $k = 1$ this is trivial). Then, we deduce
  from there that
  \begin{equation}
    \frac{1}{B(f ; x)^{2}} \sum_{\tmscript{\begin{array}{c}
      p \leqslant x\\
      f (p) \leqslant t
    \end{array}}} \frac{f (p)^{k + 2}}{p} \text{ } \longrightarrow \text{ }
    \int_{\mathbb{R}} t^k \mathd \Psi (t)
  \end{equation}
  Hence by the method of moments
  \begin{eqnarray}
    \frac{1}{B (f ; x)^{2}} \sum_{\tmscript{\begin{array}{c}
      p \leqslant x\\
      f (p) \leqslant t
    \end{array}}} \frac{f (p)^2}{p} &
    \longrightarrow & \Psi (t) 
  \end{eqnarray}
  The last step being the easy one. To prove our first step we will proceed by
  induction on $k \geqslant 0$. We will prove the stronger claim that
  \[ \lambda_f (x ; k + 2) = \Lambda (\Psi ; k + 2) + O_k \left( B^{- 2^{- (k
     + 1)}} \right) \]
  where we write $B = B (f ; x)$ to simplify notation. By Lemma \ref{MainLemma1} and \ref{MainLemma2},
  our assumption $\mathcal{D}_f (x ; \Delta) \sim
  \mathbb{P}(\mathcal{Z}_{\Psi} (B (f ; x)^{2}) \geqslant \Delta B (f ; x))$
  (for $1 \leqslant \Delta \leqslant o (B (f ; x)))$ reduces to
  \begin{equation}
    \label{mainequality}
    - \frac{\Delta^3}{B} \sum_{m \geqslant 0} \frac{\lambda_f \left( x ; m +
    2) \right.}{m + 3} \cdot \left( \Delta / B \right)^m = -
    \frac{\Delta^3}{B} \sum_{m \geqslant 0} \frac{\Lambda (\Psi ; m + 2)}{m +
    3} \cdot \left( \Delta / B \right)^m + o (1)
  \end{equation}
  valid throughout the range $1 \leqslant \Delta \leqslant o (\sigma (f ;
  x))$. Let us first establish the base case $\lambda_f (x ; 2) = \Lambda
  (\Psi ; 2) + O (B^{- 1 / 2})$. In (\ref{mainequality}) we choose $\Delta = B^{1 / 2}$.
  Because of the bounds $| \lambda_f (x ; m) | \leqslant C^m$ and $| \Lambda
  (\Psi ; m) | \leqslant C^m$ (see lemma \ref{MainLemma1} and \ref{MainLemma2}) the terms 
  $m \geqslant 1$ contribute $O (1)$.
  The $m = 0$ term is $\asymp B^{1 / 2}$. It follows that $\lambda_f (x ;
  2) = \Lambda (\Psi ; 2) + O (B^{- 1 / 2})$ and so the base case follows. Let
  us now suppose that for all $\ell < k$, $(k \geqslant 1)$
  \[ \lambda_f \left( x ; \ell + 2 \right) = \Lambda \left( \Psi ; \ell + 2
     \right) + O_{\ell} \left( B^{- 2^{- (\ell + 1)}} \right) \]
  Note that we can assume (in the above equation) that the implicit constant
  depends on $k$, by taking the max of the implicit constants in $O_{\ell}(
  B^{-2^{-(\ell+1)}})$ for $\ell < k$. 
  In equation (\ref{mainequality}) let's choose $\Delta = B^{1 - 2^{-
  (k + 1)}}$. With this choice of $\Delta$ the terms that are $\geqslant k +
  1$ in (\ref{mainequality}) contribute at most
  \[ \left( \Delta^3 / B \right) \cdot \left( C \cdot \Delta / B \right)^{k +
     1} \ll_k B^2 \cdot B^{- 3 \cdot 2^{- (k + 1)}} \cdot B^{- (k + 1) \cdot
     2^{- (k + 1)}} \]
  on both sides of (\ref{mainequality}). On the other hand, we see (by using the induction
  hypothesis) that the terms $m \leqslant k - 1$ on the left and the right
  hand side of (\ref{mainequality}) differ by no more that
  \begin{eqnarray*}
    &  & - \frac{\Delta^3}{B} \cdot \left( \sum_{\ell < k} \frac{(\Delta /
    B)^{\ell}}{\ell + 3} \cdot \left( \lambda_f \left( x ; \ell + 2 \right) -
    \Lambda \left( \Psi ; \ell + 2 \right) \right) \right)\\
    & = & O_k \left( B^2 \cdot B^{- 3 \cdot 2^{- (k + 1)}} \cdot \left(
    \sum_{\ell < k} \frac{B^{- \ell \cdot 2^{- (k + 1)}}}{\ell + 3} \cdot B^{-
    2^{- (\ell + 1)}} \right) \right)\\
    & = & O_k \left( B^2 \cdot B^{- 3 \cdot 2^{- (k + 1)}} \cdot \sum_{\ell <
    k} \frac{1}{\ell + 3} \cdot B^{- 2^{- (k + 1)} \cdot \left( \ell + 2^{k -
    \ell} \right)} \right)
  \end{eqnarray*}
  Note that for each integer $\ell < k$ we have $\ell + 2^{k - \ell} \geqslant
  k + 1$. Therefore the above error term is bounded by $O_k (B^2 \cdot B^{- 3
  \cdot 2^{- k}} \cdot B^{- (k + 1) 2^{- (k + 1)}})$. With these two
  observations at hand, relation (\ref{mainequality}) reduces to
  \[ - \frac{\Delta^3}{B} \cdot \frac{\left( \Delta / B \right)^k}{k + 3}
     \left[ \lambda_f \left( x ; k \upl 2 \right) \um \Lambda \left( \Psi ; k
     \upl 2 \right)^{^{}} \right] = O_k \left( B^{2 - 3 \cdot 2^{- \left( k
     \upl 1) \right.}} \cdot B^{- \left( k \upl 1 \right) \cdot 2^{- \left( k
     \upl 1) \right.}} \right) \upl o (1) \]
  where $\Delta = B^{1 - 2^{- (k + 1)}}$. Dividing by $\Delta^3 / B \cdot
  \left( \Delta / B \right)^k \asymp B^{2 - 3 \cdot 2^{- (k + 1)}} \cdot B^{-
  k \cdot 2^{- (k + 1)}}$ on both sides, we conclude that $\lambda_f (x ; k +
  2) - \Lambda (\Psi ; k + 2) = O_k (B^{- 2^{- (k + 1)}})$ as desired, thus
  finishing the inductive step. Now, we will prove that $\lambda_f (x ; k)
  \longrightarrow \Lambda (\Psi ; k)$ implies
  \begin{equation}
    \label{momentconvergence}
    \mathcal{M}_f (x ; \ell) \text{ } \assign \text{ } \frac{1}{B (f ; x)^{2}}
    \sum_{p \leqslant x} \frac{f (p)^{\ell + 2}}{p} \longrightarrow
    \int_{\mathbb{R}} t^{\ell} \mathd \Psi (t)
  \end{equation}
  for each fixed $\ell \geqslant 0$. This follows almost immediately from the
  recurrence relation for $\lambda_f (x ; k)$ and $\Lambda (\Psi ; k)$. Indeed
  let us prove (\ref{momentconvergence}) by induction on $k \geqslant 0$. The base case $k = 0$
  is obvious, for the left hand side and right hand side of (\ref{momentconvergence}) are both
  equal to $1$. Let us now suppose that (\ref{momentconvergence}) holds for all $\ell < k$. We
  will prove that convergence also holds for $\ell = k$. \ By definition of
  $\lambda_f (x ; k + 1)$ we have
  \begin{equation}
    \label{transform}
    \lambda_f (x ; k \upl 1) = \um \sum_{j = 2}^k \frac{\mathcal{M}_f (x ; j
    \um 1)}{j!} \sum_{\ell_1 + \ldots + \ell_j = k + 1} \lambda_f (x ; \ell_1)
    \ldots \lambda_f (x ; \ell_j) \um \frac{\mathcal{M}_f (x ; k)}{(k + 1) !}
  \end{equation}
  (we single out $j = k + 1$ on the right hand side). By induction hypothesis
  $\mathcal{M}_f (x ; j \um 1) \longrightarrow \int t^{j - 1} \mathd 
  \Psi (t)$
  as $x \rightarrow \infty$, for $j \leqslant k$. Further as we've shown
  earlier $\lambda_f (x ; i) \longrightarrow \Lambda (\Psi ; i)$ for all $i
  \geqslant 0$. Therefore the whole double sum on the right hand side of
  (\ref{transform}) tends to
  \[ - \sum_{j = 2}^k \frac{1}{j!} \int_{\mathbb{R}} t^{j - 1} \mathd \Psi (t)
  \sum_{\ell_1 + \ldots + \ell_j = k + 1} \Lambda (\Psi ; \ell_1)
     \cdot \ldots \cdot \Lambda (\Psi ; \ell_j) \]
  which, by definition of $\Lambda (\Psi ; k)$ is equal to $\Lambda (\Psi ; k
  + 1) + 1 / (k + 1) ! \int_{\mathbb{R}} t^k \mathd \Psi (t)$. But also
  $\lambda_f (x ; k + 1) \longrightarrow \Lambda (\Psi ; k + 1)$ because
  $\lambda_f (x ; i) \longrightarrow \Lambda (\Psi ; i)$ for all $i \geqslant
  0$. Thus the left hand side of (\ref{transform}) tends to $\Lambda (\Psi ; k + 1)$
  while the double sum on the right hand side of (\ref{transform}) tends to $\Lambda
  (\Psi ; k + 1) + 1 / (k + 1) ! \int_{\mathbb{R}} t^k \mathd \Psi (t)$.
  Therefore equation (\ref{transform}) transforms into
  \[ \Lambda (\Psi ; k + 1) = \Lambda (\Psi ; k + 1) + \frac{1}{(k + 1) !}
     \int_{\mathbb{R}} t^k \mathd \Psi (t) - \frac{\mathcal{M}_f (x ;
     k)}{(k + 1) !} + o_{x \rightarrow \infty} (1) \]
  and $\mathcal{M}_f (x ; k) \rightarrow \int_{\mathbb{R}} t^k \mathd \Psi(t)$
  $(x \rightarrow \infty)$ follows. This establishes the induction step
  and thus (\ref{momentconvergence}) for all fixed $\ell \geqslant 0$. Now we use the method of
  moments to prove that
  \[ F (x ; t) \assign \text{ } \frac{1}{B^2 (f ; x)}
     \sum_{\tmscript{\begin{array}{c}
       p \leqslant x\\
       f (p) \leqslant t
     \end{array}}} \frac{f (p)^2}{p} \text{ } \underset{x \rightarrow
     \infty}{\longrightarrow} \text{ } \Psi (t) \]
  holds at all continuity points $t$ of $\Psi (t)$. Let us note that, the
  $k$-th moment of the distribution function $F (x ; t)$, is given by
  $\mathcal{M}_f (x ; k)$, and as we've just shown this converges to the the
  $k$-th moment of $\Psi (t)$. That is
  \[ \int_{\mathbb{R}} t^k \mathd F (x ; t) = \frac{1}{B^2 (f ; x)}  \sum_{p
     \leqslant x} \frac{f (p)^{k + 2}}{p} \text{ } \longrightarrow \text{ }
     \int_{\mathbb{R}} t^k \mathd \Psi (t) \]
  Since $\Psi (a) - \Psi (0) = 1$ for some $a > 0$, the distribution function
  $\Psi$ satisfies the assumption of Lemma \ref{momentlemma}, hence, by Lemma \ref{momentlemma} (the
  method of moments) we have $F (x ; t) \longrightarrow \Psi (t)$ at all
  continuity points $t$ of $\Psi$ as desired. 
\end{proof}

\section{Primes to integers: Proof of Theorem \ref{MainThm2}}

We keep the same notation as in the previous section. Namely we recall that,
\begin{eqnarray*}
  B^2 (f ; x) \assign \sum_{p \leqslant x} \frac{f (p)^2}{p} & \tmop{and} &
  \mathcal{D}_f \left( x ; \Delta \right) \assign \frac{1}{x} \cdot
  \# \left\{ n \leqslant x : \frac{f (n) - \mu (f ; x)}{B (f ; x)} \geqslant
  \Delta \right\}
\end{eqnarray*}
We first need to modify a little some of the known large deviations results
for $\mathcal{D}_f (x ; \Delta)$ and $\mathbb{P}(\mathcal{Z}_{\Psi}
(B^2 (f ; x)) \geqslant \Delta B (f ; x))$.

\subsection{Large deviations for $\mathcal{D}_f (x ; \Delta)$ and
$\mathbb{P}(\mathcal{Z}_{\Psi} (x) \geqslant t)$ revisited}

First we require the result of Maciulis (\cite{Maciulis}, theorem) in a 
``saddle-point'' version.

\begin{lemma}
  \label{MMainLemma1}
  Let $g$ be a strongly additive function such that $0 \leqslant g (p)
  \leqslant O (1)$ and $B(g;x) \rightarrow \infty$. 
  We have uniformly in $1 \leqslant \Delta \leqslant o (B (g
  ; x))$,
  \[ \mathcal{D}_g (x ; \Delta) \sim \exp \left( \sum_{p \leqslant x}
     \frac{e^{\eta g (p)} \um \eta g (p) \um 1}{p} - \eta \sum_{p \leqslant x}
     \frac{g (p) (e^{\eta g (p)} \um 1)}{p} \right) \frac{e^{\Delta^2 /
     2}}{\sqrt{2 \pi}} \int_{\Delta}^{\infty} e^{- t^2 / 2} \mathd t \]
  where $\eta = \eta_g (x ; \Delta)$ is defined as the unique positive
  solution of the equation
  \[ \sum_{p \leqslant x} \frac{g (p) e^{\eta g (p)}}{p} = \mu (g ; x) +
     \Delta B (g ; x) \]
  Furthermore $\eta_g (x ; \Delta) = \Delta / B (g ; x) + O (\Delta^2 / B (g ;
  x)^2)$.
\end{lemma}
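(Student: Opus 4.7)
The plan is to deduce this saddle-point statement from the series form already established in Lemma \ref{MainLemma1}. Introduce the auxiliary cumulant-type function
$\phi(\eta) \assign \sum_{p \leqslant x} (e^{\eta g(p)} - \eta g(p) - 1)/p,$
so that $\phi'(\eta) = \sum_{p \leqslant x} g(p)(e^{\eta g(p)} - 1)/p$ and $\phi''(\eta) = \sum_{p \leqslant x} g(p)^2 e^{\eta g(p)}/p$. The saddle equation in the statement is equivalent to $\phi'(\eta) = \Delta B(g;x)$ (just subtract $\mu(g;x)$ from both sides). Since $\phi''>0$, the function $\phi'$ is strictly increasing on $[0,\infty)$ with $\phi'(0)=0$, giving existence and uniqueness of $\eta_g(x;\Delta)$. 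Expanding $\phi'(\eta) = \eta B(g;x)^2 + O(\eta^2 B(g;x)^2)$, where the error uses only $g(p) = O(1)$, and inverting gives the claimed $\eta_g(x;\Delta) = \Delta/B(g;x) + O(\Delta^2/B(g;x)^2)$.

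The heart of the proof is the identity
\begin{equation*}
\phi(\eta_g(x;\Delta)) - \eta_g(x;\Delta)\, \phi'(\eta_g(x;\Delta)) + \frac{\Delta^2}{2} = -\frac{\Delta^3}{B(g;x)} \sum_{k \geqslant 0} \frac{\lambda_g(x;k+2)}{k+3} \left(\Delta/B(g;x)\right)^k,
\end{equation*}
which says that Maciulis's exponent equals the saddle-point exponent plus the Gaussian correction $\Delta^2/2$. Write $B = B(g;x)$. Both sides vanish at $\Delta = 0$, so it suffices to match $\Delta$-derivatives. Using $\phi'(\eta) = \Delta B$ and $\phi''(\eta)\eta'(\Delta) = B$, the left-hand side has derivative $-\eta B + \Delta$. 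For the right-hand side, I invoke the key observation from the proof of Lemma \ref{MainLemma1}: the Maciulis series $\mathcal{G}_g(x;z) = \sum_{j \geqslant 1} \lambda_g(x;j) z^j$ is the functional inverse of $\mathcal{F}_g(x;z) = \phi'(z)/B^2$. Hence $\phi'(\eta) = \Delta B$ rearranges to $\eta = \mathcal{G}_g(x;\Delta/B) = \Delta/B + \sum_{j \geqslant 2} \lambda_g(x;j)(\Delta/B)^j$; substituting into $-\eta B + \Delta$ and reindexing $j = k+2$ yields exactly $-\sum_{k \geqslant 0} \lambda_g(x;k+2)\,\Delta^{k+2}/B^{k+1}$, which is the derivative of the right-hand side of the identity.

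With the identity in hand, I substitute it into Lemma \ref{MainLemma1}: the Maciulis exponent equals $\phi(\eta)-\eta\phi'(\eta) + \Delta^2/2$, and extracting the factor $e^{\Delta^2/2}$ from the Gaussian tail produces the advertised expression $\frac{e^{\Delta^2/2}}{\sqrt{2\pi}}\int_\Delta^\infty e^{-t^2/2}\,dt$. Convergence of the series $\mathcal{G}_g(x;\Delta/B)$ in the range $1 \leqslant \Delta \leqslant o(B)$ is ensured by the uniform bound $|\lambda_g(x;j)| \leqslant C^j$ proved in Lemma \ref{MainLemma1}.

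The main obstacle is the bookkeeping in the middle paragraph: recognizing that Maciulis's combinatorial inverse $\mathcal{G}_g$ coincides with the analytic inverse of $\phi'/B^2$ is the clever step, after which the derivative calculation falls out cleanly. The expansion of $\eta_g(x;\Delta)$ and the final substitution into Lemma \ref{MainLemma1} are routine.
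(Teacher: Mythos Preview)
Your proof is correct, but it takes a different route from the paper. The paper simply cites the saddle-point asymptotic as Maciulis's theorem (\cite{Maciulis}, theorem) and proves only the final assertion $\eta_g(x;\Delta)=\Delta/B(g;x)+O(\Delta^2/B(g;x)^2)$, via the elementary two-step argument: first $\eta\leqslant\Delta/B$ from $\eta g(p)\leqslant e^{\eta g(p)}-1$, then a Taylor expansion of $e^{\eta g(p)}-1$ to upgrade this to the stated asymptotic. You instead derive the saddle-point form from the series form of Lemma~\ref{MainLemma1} by establishing the Legendre-type identity $\phi(\eta)-\eta\phi'(\eta)+\Delta^2/2=-(\Delta^3/B)\sum_{k\geqslant0}\lambda_g(x;k+2)(\Delta/B)^k/(k+3)$, exploiting the inverse-function relation $\mathcal{F}_g(x;\mathcal{G}_g(x;z))=z$ proved there.

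Your approach costs more work but buys self-containment: it needs only Maciulis's lemma~1A (already quoted in Lemma~\ref{MainLemma1}) rather than a separate appeal to his main theorem. It also mirrors exactly what the paper itself does in the proof of Lemma~\ref{MMainLemma2}, where the analogous saddle-point statement for $\mathcal{Z}_\Psi$ is obtained from Hwang's series form by integrating the inverse-function relation $(u')^{-1}(\xi)=\sum_{k\geqslant0}\Lambda(\Psi;k)\xi^k$. So your argument makes the parallel between the arithmetic and probabilistic sides explicit at the level of proof technique. One minor point: your inversion ``$\phi'(\eta)=\eta B^2+O(\eta^2 B^2)$ gives $\eta=\Delta/B+O(\Delta^2/B^2)$'' tacitly uses an a~priori bound $\eta=O(\Delta/B)$; this is harmless (it follows from $\eta B^2\leqslant\phi'(\eta)=\Delta B$), but the paper's proof makes this step explicit.
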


\begin{proof}
  Only the last assertion needs to be proved, because it is not stated
  explicitly in Maciulius's paper. Fortunately enough, it's a triviality.
  Indeed, writing $\eta = \eta_g (x ; \Delta)$, we find that
  \[ 0 \leqslant \eta_g (x ; \Delta) \sum_{p \leqslant x} \frac{g (p)^2}{p}
     \leqslant \sum_{p \leqslant x} \frac{g (p) (e^{\eta g (p)} - 1)}{p} =
     \Delta B (g ; x) \]
  Dividing by $B^2 (g ; x)$ on both sides $0 \leqslant \eta_g (x ; \Delta)
  \leqslant \Delta / B (g ; x)$ follows. Now expanding $e^{\eta g (p)} - 1 =
  \eta g (p) + O (\eta^2 g (p)^2)$ and noting that $O (\eta^2 g (p)^2) = O
  (\eta^2 g (p))$ because $g (p) = O (1)$, we find that
  \[ \Delta B (g ; x) = \sum_{p \leqslant x} \frac{g (p) (e^{\eta g (p)} -
     1)}{p} = \eta \sum_{p \leqslant x} \frac{g (p)^2}{p} + O \left( \eta^2
     \sum_{p \leqslant x} \frac{g (p)^2}{p} \right) \]
  Again dividing by $B^2 (g ; x)$ on both sides, and using the bound $\eta = O
  (\Delta / B (g ; x))$ the claim follows.
\end{proof}

Adapting Hwang's \cite{Hwang} result we prove the following.

\begin{lemma}
  \label{MMainLemma2}
  Let $\Psi$ be a distribution function. Suppose that there is an $\alpha > 0$
  such that $\Psi (\alpha) - \Psi (0) = 1$. Let $B^2 = B^2 (x) \rightarrow
  \infty$ be some function tending to infinity. Then, uniformly in $1
  \leqslant \Delta \leqslant o (B (x))$ the quantity $\mathbb{P} \left(
  \mathcal{Z}_{\Psi} (B^2) \geqslant \Delta B \right)$ is asymptotic to
  \[ \exp \left( B^2 \int_{\mathbb{R}} \frac{e^{\rho u} - \rho u - 1}{u^2}
     \mathd \Psi (u) - B^2 \cdot \rho \int_{\mathbb{R}} \frac{e^{\rho u} -
     1}{u} \mathd \Psi (u) \right) \cdot \frac{e^{\Delta^2 / 2}}{\sqrt[]{2
     \pi}} \int_{\Delta}^{\infty} e^{- t^2 / 2} \tmop{dt} \]
  where $\rho = \rho_{\Psi} (B (x) ; \Delta)$ is defined implicitly, as the
  unique positive solution to
  \[ B^2 (x) \int_{\mathbb{R}} \frac{e^{\rho u} - 1}{u} \mathd \Psi (u) =
     \Delta \cdot B (x) \]
\end{lemma}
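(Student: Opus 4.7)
The plan is to derive Lemma \ref{MMainLemma2} directly from the Cramér-series form already established in Lemma \ref{MainLemma2}, by rewriting the exponent in saddle-point variables — this is the same passage that turns Maciulis's original statement into the form of Lemma \ref{MMainLemma1}. Set $K(\rho) \assign \int (e^{\rho u} - \rho u - 1)/u^2 \, \mathd \Psi(u)$, so that $\log \mathbb{E}[e^{\rho \mathcal{Z}_\Psi(B^2)}] = B^2 K(\rho)$ and $K'(\rho) = \int (e^{\rho u} - 1)/u \, \mathd \Psi(u)$. In these terms the saddle point $\rho = \rho_\Psi(B;\Delta)$ is characterized by $B^2 K'(\rho) = \Delta B$, and the exponential factor appearing in the statement is exactly $B^2 K(\rho) - B^2 \rho K'(\rho)$.

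First I would verify existence, uniqueness, and the expansion of $\rho$. Since $\Psi$ is compactly supported, $K$ is entire and $K''(\rho) = \int e^{\rho u} \, \mathd \Psi(u) > 0$, so $K'$ is strictly increasing on $[0,\infty)$ with $K'(0) = 0$; this yields a unique positive $\rho$ solving $K'(\rho) = \Delta/B$. The Taylor expansion $K'(\rho) = \rho + O(\rho^2)$ (with constants depending only on the support of $\Psi$) then gives $\rho_\Psi(B;\Delta) = \Delta/B + O(\Delta^2/B^2)$ by exactly the argument used in Lemma \ref{MMainLemma1}. In particular $\rho = o(1)$ uniformly in $1 \leq \Delta \leq o(B)$, which puts us inside the radius of convergence for all the power series invoked below.

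The core step is the exact algebraic identity
\begin{equation*}
B^2 \bigl( K(\rho) - \rho K'(\rho) \bigr) + \frac{\Delta^2}{2} = -\frac{\Delta^3}{B} \sum_{k \geq 0} \frac{\Lambda(\Psi; k+2)}{k+3} \left( \frac{\Delta}{B} \right)^k.
\end{equation*}
To prove it, integration by parts gives $K(\rho) - \rho K'(\rho) = -\int_0^\rho s\,K''(s)\,\mathd s$; the change of variable $u = K'(s)$, a diffeomorphism near $0$, rewrites this as $-\int_0^{\Delta/B} (K')^{-1}(u)\,\mathd u$. Substituting the power series $(K')^{-1}(u) = \sum_{k \geq 1} \Lambda(\Psi;k)\, u^k$ coming from equation \eqref{uinverse}, integrating term by term, and peeling off the $k=1$ summand — which contributes exactly $-\tfrac{1}{2}(\Delta/B)^2$ and so cancels the $\Delta^2/2$ added on the left — produces the right-hand side after the shift $k \mapsto k+2$.

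Combining the identity with Lemma \ref{MainLemma2} closes the argument: the right-hand side of the identity is precisely the exponent appearing there, so exponentiating converts the tail factor $\int_\Delta^\infty e^{-t^2/2}\,\mathd t/\sqrt{2\pi}$ of Lemma \ref{MainLemma2} into the Mills-type factor $(e^{\Delta^2/2}/\sqrt{2\pi})\int_\Delta^\infty e^{-t^2/2}\,\mathd t$ of the present statement, with the saddle-point exponential $\exp(B^2 K(\rho) - B^2 \rho K'(\rho))$ left in front. The only real obstacle is justifying the termwise integration of the inverse series, but uniform absolute convergence of $\sum \Lambda(\Psi;k)(\Delta/B)^k$ is automatic from $|\Lambda(\Psi;k)| \leq C^k$ (also supplied by Lemma \ref{MainLemma2}) together with $\Delta/B = o(1)$. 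Thus the whole lemma is really an algebraic reformulation of Lemma \ref{MainLemma2} via Lagrange inversion, and no new analytic input is required.
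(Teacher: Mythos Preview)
Your argument is correct and is essentially the same as the paper's: both derive the lemma from Lemma~\ref{MainLemma2} by proving the identity $B^2(K(\rho)-\rho K'(\rho))+\Delta^2/2=-B^2\sum_{k\geqslant 0}\frac{\Lambda(\Psi;k+2)}{k+3}(\Delta/B)^{k+3}$, and both obtain it by integrating \eqref{uinverse} (the paper integrates $(u')^{-1}(\xi)-\xi$ directly and uses the inverse-function integration formula, whereas you reach the same thing via $K(\rho)-\rho K'(\rho)=-\int_0^\rho sK''(s)\,\mathd s$ and the substitution $u=K'(s)$). Your added remarks on existence/uniqueness of $\rho$ and on the legitimacy of termwise integration are fine and not in the paper, but they do not change the approach.
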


\begin{proof}
  We keep the same notation as in lemma \ref{MainLemma2}. While proving lemma \ref{MainLemma2} we
  established the following useful relationship (see (\ref{uinverse}))
  \[ \sum_{k \geqslant 0} \Lambda (\Psi ; k + 2) \xi^{k + 2} = (u')^{- 1}
     (\xi) - \xi \text{ where } u (z) = \int_{\mathbb{R}} \frac{e^{zt} - zt -
     1}{t^2} \mathd \Psi (t) \]
  Here $(u')^{- 1}$ denotes the inverse function of $u'$. \ Integrating the
  above gives
  \[ \sum_{k \geqslant 0} \frac{\Lambda (\Psi ; k + 2)}{k + 3} \cdot \xi^{k +
     3} = - \frac{\xi^2}{2} + \xi \cdot (u')^{- 1} (\xi) - u ((u')^{- 1}
     (\xi)) \]
  Now choose $\xi = \Delta / B$, then by definition $\rho = \rho_{\Psi} (B (x)
  ; \Delta) = (u')^{- 1} (\xi)$. Thus the above formula becomes
  \[ \sum_{k \geqslant 0} \frac{\Lambda (\Psi ; k + 2)}{k + 3} \cdot (\Delta /
     B)^{k + 3} = - \frac{(\Delta / B)^2}{2} + \frac{\Delta}{B} \cdot \rho -
     \int_{\mathbb{R}} \frac{e^{\rho t} - \rho t - 1}{t^2} \mathd \Psi (t) \]
  Also, note that by definition $\Delta / B = \int_{\mathbb{R}} (e^{\rho t} -
  1) / t \cdot \mathd \Psi (t)$. Using the above formula (in which we replace
  $(\Delta / B) \cdot \rho$ by $\rho \int_{\mathbb{R}} (e^{\rho t} - 1) / t
  \cdot \mathd \Psi (t)$) and lemma \ref{MainLemma2}
  \begin{eqnarray*}
    &  & \mathbb{P}(\mathcal{Z}_{\Psi} (B^2) \geqslant \Delta B) \sim \exp
    \left( - B^2 \sum_{k \geqslant 0} \frac{\Lambda (\Psi ; k + 2)}{k + 3}
    \cdot \left( \Delta / B \right)^{k + 3} \right) \int_{\Delta}^{\infty}
    e^{- u^2 / 2} \cdot \frac{\mathd u}{\sqrt{2 \pi}}\\
    & = & \exp \left( B^2 \int_{\mathbb{R}} \frac{e^{\rho t} - \rho t -
    1}{t^2} \mathd \Psi (t) - B^2 \cdot \rho \int_{\mathbb{R}} \frac{e^{\rho
    t} - 1}{t} \mathd \Psi (t) \right) \cdot \frac{e^{\Delta^2 / 2}}{\sqrt[]{2
    \pi}} \int_{\Delta}^{\infty} e^{- u^2 / 2} \mathd u
  \end{eqnarray*}
  This is the claim.
\end{proof}

Before we prove Theorem \ref{MainThm2}, we need to show that the parameters $\eta_g (x ;
\Delta)$ and $\rho_{\Psi} (B (f ; x) ; \Delta)$ (as defined respectively in
Lemma \ref{MMainLemma1} and Lemma \ref{MMainLemma2}) are ``close'' when the distribution of the $g (p)$'s
resembles $\Psi (t)$. The ``closeness'' assertion is made precise in the next
lemma.

\begin{lemma}
  \label{MMainLemma3}
  Let $\Psi (\cdot)$ be a distribution function. Let $f$ be a positive
  strongly additive function. Suppose that $0 \leqslant f (p) \leqslant O (1)$
  for all primes $p$. Let
  \[ K_f (x ; t) \assign \frac{1}{B^2 (f ; x)}
     \sum_{\tmscript{\begin{array}{c}
       p \leqslant x\\
       f (p) \leqslant t
     \end{array}}} \frac{f (p)^2}{p} \]
  If $K_f (x ; t) - \Psi (t) \ll 1 / B^2 (f ; x)$ uniformly in $t \in
  \mathbb{R}$, then
  \[ \rho_{\Psi} (B (f ; x) ; \Delta) - \eta_f (x ; \Delta) = o (1 / B^2 (f ;
     x)) \]
  uniformly in $1 \leqslant \Delta \leqslant o (B (f ; x))$. The symbols
  $\rho_{\Psi} (B (x) ; \Delta)$ and $\eta_f (x ; \Delta)$ are defined in
  lemma \ref{MMainLemma2} and lemma \ref{MMainLemma1} respectively. 
\end{lemma}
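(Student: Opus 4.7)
The plan is to reformulate both defining equations as integrals against a common test function. Writing $h_s(t) := (e^{st}-1)/t$, the equation for $\eta = \eta_f(x;\Delta)$ from Lemma~\ref{MMainLemma1} can be recast as
$$\int h_\eta(t)\, \mathd K_f(x;t) = \Delta/B(f;x),$$
using that $\mathd K_f(x;t)$ is the Stieltjes measure placing mass $f(p)^2/(p B^2(f;x))$ at each $t = f(p)$; meanwhile $\rho = \rho_\Psi(B(f;x);\Delta)$ satisfies $\int h_\rho(t)\, \mathd \Psi(t) = \Delta/B(f;x)$ by definition. Setting $F(s) := \int h_s(t)\, \mathd \Psi(t)$, the two defining equations combine to give
$$F(\rho) - F(\eta) = \int h_\eta(t)\, \mathd(K_f(x;t) - \Psi(t)).$$

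Next I would linearize the left-hand side. By the mean value theorem $F(\rho) - F(\eta) = F'(\xi)(\rho - \eta)$ for some $\xi$ between $\eta$ and $\rho$. Both $\eta$ and $\rho$ are $O(\Delta/B(f;x)) = o(1)$: for $\eta$ this is stated in Lemma~\ref{MMainLemma1}, and for $\rho$ it follows by applying the elementary inequality $(e^{\rho t}-1)/t \geqslant \rho$ (valid for $\rho, t \geqslant 0$) to its defining equation. Since $\Psi$ is supported on a fixed bounded interval $[0,\alpha]$, we have $F'(\xi) = \int e^{\xi t}\, \mathd \Psi(t) = 1 + o(1)$, and so it remains to show that the right-hand side above is $o(1/B^2(f;x))$.

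For this I would integrate by parts. Both $K_f(x;\cdot)$ and $\Psi$ are supported in a common compact interval $[0,M]$, with $K_f(x;M) = \Psi(M) = 1$ and $K_f(x;0-) = \Psi(0-) = 0$, so the boundary terms vanish and the integral equals $-\int_0^M h_\eta'(t) (K_f(x;t) - \Psi(t))\, \mathd t$. From the expansion $h_\eta(t) = \eta + \eta^2 t/2 + O(\eta^3 t^2)$ one obtains $|h_\eta'(t)| = O(\eta^2) = O(\Delta^2/B^2(f;x))$ uniformly on $[0,M]$. Combined with the hypothesis $|K_f(x;t) - \Psi(t)| \ll 1/B^2(f;x)$, this gives $|F(\rho) - F(\eta)| \ll \Delta^2/B^4(f;x)$, and hence $\rho - \eta = O(\Delta^2/B^4(f;x)) = o(1/B^2(f;x))$ since $\Delta = o(B(f;x))$. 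The main technical delicacy is justifying the integration by parts against the purely atomic Stieltjes measure $\mathd K_f$, but this reduces to standard Abel summation and is straightforward given that $h_\eta$ is smooth and the boundary contributions cancel.
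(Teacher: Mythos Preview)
Your argument is correct and is essentially identical to the paper's proof: both rewrite the defining equations as $\int h_\eta\,\mathd K_f = \Delta/B = \int h_\rho\,\mathd\Psi$ with $h_s(t)=(e^{st}-1)/t$, integrate by parts against $K_f-\Psi$ to bound the right-hand side by $O(\eta^2/B^2)$ using $|h_\eta'(t)|=O(\eta^2)$, and then linearize in $\rho-\eta$. The only cosmetic difference is that you invoke the mean value theorem with $F'(\xi)=1+o(1)$, whereas the paper estimates $(e^{\rho t}-e^{\eta t})/t \asymp \rho-\eta$ directly; these are the same observation.
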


\begin{proof}
  Let $\eta = \eta_f (x ; \Delta)$. Recall that by lemma \ref{MMainLemma1}, 
  $\eta = o (1)$ in the range $1 \leqslant \Delta \leqslant o(B(f;x))$. 
  This will
  justify the numerous Taylor expansions involving the parameter $\eta$. With
  $K_f (x ; t)$ defined as in the statement of the lemma, we have
  \begin{eqnarray}
    \label{approx1}
    \sum_{p \leqslant x} \frac{f (p) e^{\eta f (p)}}{p} - \mu (f ; x) & = &
    \sum_{p \leqslant x} \frac{f (p) (e^{\eta f (p)} - 1)}{p}_{} \nonumber\\
    & = & B^2 (f ; x) \int_{\mathbb{R}} \frac{e^{\eta t} - 1}{t} \mathd K_f
    (x ; t) 
  \end{eqnarray}
  Let $M > 0$ be a real number such that $0 \leqslant f (p) \leqslant M$ for
  all $p$. Since the $f (p)$ are bounded, for each $x > 0$ the distribution
  function $K_f (x ; t)$ is supported on $[0 ; M]$. Furthermore since $K_f (x
  ; t) \rightarrow \Psi (t)$ the distribution function $\Psi (t)$ is supported
  on exactly the same interval. From these considerations, it follows that
  \begin{eqnarray}
    \label{approx2}
    &  & \int_{\mathbb{R}} \frac{e^{\eta t} - 1}{t} \mathd K_f (x ; t)
    \text{ } = \text{ } \int_0^M \frac{e^{\eta t} - 1}{t} \mathd K_f (x ; t)
    \nonumber\\
    & = & \int_0^M \frac{e^{\eta t} - 1}{t} \mathd \Psi (t) + \int_0^M \left(
    K_f (x ; t) - \Psi (t)^{^{^{}}} \right) \cdot \left[ \frac{e^{\eta t} -
    \eta t \cdot e^{\eta t} - 1}{t^2} \right] \mathd t 
  \end{eqnarray}
  By a simple Taylor expansion $e^{\eta t} - \eta t \cdot e^{\eta t} - 1 = O
  (\eta^2 t^2)$. Therefore the integral on the right hand side is bounded by
  $O (\eta^2 / B^2 (f ; x))$. We conclude from (\ref{approx1}) and (\ref{approx2}) that
  \begin{equation}
    \label{close1}
    \sum_{p \leqslant x} \frac{f (p) e^{\eta f (p)}}{p} - \mu (f ; x) = B^2 (f
    ; x) \int_{\mathbb{R}} \frac{e^{\eta t} - 1}{t} \mathd \Psi (t) + O
    \left( \eta^2 \right)
  \end{equation}
  By definition of $\rho_{\Psi}$ and $\eta_f$,
  \begin{equation}
    \label{close2}
    B^2 (f ; x) \int_{\mathbb{R}} \frac{e^{\rho_{\Psi} (B ; \Delta) t} -
    1}{t} \mathd \Psi (t) = \Delta B (f ; x) = \sum_{p \leqslant x} \frac{f
    (p) e^{\eta_f (x ; \Delta) f (p)}}{p} - \mu (f ; x)
  \end{equation}
  From (\ref{close1}) and (\ref{close2}) it follows that
  \begin{equation}
    \label{final1}
    B^2 (f ; x) \int_{\mathbb{R}} \frac{e^{\rho_{\Psi} (B ; \Delta) t} -
    e^{\eta_f (x ; \Delta) t}}{t} \mathd \Psi (t) = O \left( \eta^2_f (x ;
    \Delta) \right)
  \end{equation}
  Since $\Psi (t)$ is supported on $[0 ; M]$ we can restrict the above
  integral to $[0 ; M]$. By lemma \ref{MMainLemma1}, we have $\eta_f (x ; \Delta) \sim
  \Delta / B (f ; x) = o (1)$ in the range $\Delta \leqslant o (B (f ; x))$.
  Also $0 \leqslant \rho_{\Psi} (x ; \Delta) \leqslant \int_{[0 ; M]}
  (e^{\rho_{\Psi} (x ; \Delta) t} - 1) / t \cdot \mathd \Psi (t) = \Delta / B
  (f ; x) = o (1)$ for $\Delta$ in the same range. Write $\rho \assign
  \rho_{\Psi} (B ; \Delta)$ and $\eta \assign \eta_f (x ; \Delta)$. For $0
  \leqslant t \leqslant M$, we have
  \begin{eqnarray*}
    (1 / t) \left( e^{\rho t} - e^{\eta t} \right) & = & (1 / t) e^{\rho t}
    \cdot (1 - e^{(\eta - \rho) t})\\
    & = & e^{\rho t} \cdot (\eta - \rho) + O ((\eta - \rho)^2) \text{ }
    \asymp \text{ } \eta - \rho
  \end{eqnarray*}
  because $\rho = o (1)$, $\eta = o (1)$. Inserting this estimate into
  (\ref{final1}) we get $\eta - \rho = O (\eta^2 / B^2 (f ; x)) = o (1 / B^2 (f ;
  x))$ since $\eta^2 = o (1)$. The lemma is proved. 
\end{proof}

\subsection{Proof of Theorem \ref{MainThm2}}

\begin{proof}[Proof of Theorem \ref{MainThm2}]
  Let notation be as in Lemma \ref{MMainLemma2}. Let $\eta = \eta_f (x ; \Delta)$ be the
  parameter from lemma \ref{MMainLemma1}. Proceeding as in the proof of the previous lemma,
  we get
  \begin{eqnarray}
    \label{closer1}
    \sum_{p \leqslant x} \frac{e^{\eta f (p)} - \eta f (p) - 1}{p} & = & B^2
    (f ; x) \int_{\mathbb{R}} \frac{e^{\eta u} - \eta u - 1}{u^2} \mathd \Psi
    (u) + o (1) \\
    \label{closer2}
    \eta \sum_{p \leqslant x} \frac{f (p) \cdot (e^{\eta f (p)} - 1)}{p} & = &
    B^2 (f ; x) \eta \int_{\mathbb{R}} \frac{e^{\eta u} - 1}{u} \mathd \Psi
    (u) + o (1) 
  \end{eqnarray}
  throughout the range $1 \leqslant \Delta \leqslant o (B (f ; x))$. Let $\rho
  \assign \rho_{\Psi} (B (f ; x) ; \Delta)$ denote the parameter from Lemma
  \ref{MMainLemma2}. The functions on the right of $(10.6)$ and $(10.7)$ are analytic.
  Therefore, by lemma \ref{MMainLemma3},
  \begin{eqnarray}
    \label{result1}
    B^2 (f ; x) \int_{\mathbb{R}} \frac{e^{\eta u} - \eta u - 1}{u^2} \mathd
    \Psi (u) & = & B^2 (f ; x) \int_{\mathbb{R}} \frac{e^{\rho u} - \rho u -
    1}{u^2} \mathd \Psi (u) \upl o (1) \\
    \label{result2}
    B^2 (f ; x) \eta \int_{\mathbb{R}} \frac{e^{\eta u} - 1}{u} \mathd \Psi
    (u) & = & B^2 (f ; x) \rho \int_{\mathbb{R}} \frac{e^{\rho u} - 1}{u} 
    \mathd \Psi (u) + o (1) 
  \end{eqnarray}
  uniformly in $1 \leqslant \Delta \leqslant o (B (f ; x))$. On combining
  (\ref{closer1}) with (\ref{result1}) and (\ref{closer2}) with (\ref{result2}) we obtain
  \begin{eqnarray*}
    &  & \sum_{p \leqslant x} \frac{e^{\eta f (p)} - \eta f (p) - 1}{p} -
    \eta \sum_{p \leqslant x} \frac{f (p) (e^{\eta f (p)} - 1)}{p}\\
    & = & B^2 (f ; x) \int_{\mathbb{R}} \frac{e^{\rho u} - \rho u - 1}{u^2}
    \mathd \Psi (u) - B^2 (f ; x) \rho \int_{\mathbb{R}} \frac{e^{\rho u} -
    1}{u} \mathd \Psi (u) + o(1)
  \end{eqnarray*}
  By lemma \ref{MMainLemma1}, lemma \ref{MMainLemma2} and the above equation, we get
  \[ \mathcal{D}_f (x ; \Delta) \sim \mathbb{P} \left(
     \mathcal{Z}_{\Psi} \left( B^2 (f ; x) \right) \geqslant \Delta B (f ; x)
     \right) \]
  uniformly in $1 \leqslant \Delta \leqslant o (B (f ; x))$ as desired. 
\end{proof}

{\noindent}\tmtextbf{Acknowledgements.} This is part of author's undergraduate thesis, written under the direction of
Andrew Granville. The author would like to thank first and foremost Andrew Granville. There is too much to thank for, so 
it is simpler to note that this project would not surface without his constant support. Also, the author would like
to thank Philippe Sosoe for 
proof-reading a substantial part of the old manuscript of this paper. 
{\hspace*{\fill}}{\medskip}

\bibliography{newpaper2}

\begin{thebibliography}{10}

\bibitem{Cramer}
H.~Cramer.
\newblock Sur un nouveau th\'eor\'eme-limite de la th\'eorie des
  probabilit\'es.
\newblock {\em Actualit\'es Scientifiques et Industrielles}, 736:5 -- 23, 1938.

\bibitem{Elliott}
P.~D. T.~A. Elliott.
\newblock {\em Probabilistic Number Theory. Vol II. Central limit theorems.}
\newblock Springer-Verlag, 1980.

\bibitem{Gut}
A.~Gut.
\newblock {\em Probability : A graduate course}.
\newblock Springer-Verlag, 2005.

\bibitem{Halasz}
G.~Hal\'asz.
\newblock On the distribution of additive and the mean-values of multiplicative
  arithmetic functions.
\newblock {\em Studia. Sci. Math. Hungarica.}, 6:211 -- 233, 1971.

\bibitem{Hwang}
H.-K. Hwang.
\newblock Large deviations for combinatorial distributions. i. central limit
  theorems.
\newblock {\em Ann. Appl. Probab.}, 6 no. 1:297 -- 319, 1996.

\bibitem{Kac}
M.~Kac.
\newblock {\em Statistical Independance in Probability, Analysis, and Number
  theory}.
\newblock Mathematical Association of America, 1959.

\bibitem{Kubilius}
J.~Kubilius.
\newblock Probabilistic methods in the theory of numbers.
\newblock 1964.

\bibitem{Maciulis}
A.~Maciulis.
\newblock A lemma on large deviations.
\newblock {\em Lithuanian. Mat. Journal}, 23 no. 1:70 --78, 1983.

\bibitem{Maks}
M.~Radziwi\l\l.
\newblock On a structure theorem in probabilistic number theory.
\newblock {\em pre-print}, 2011.

\bibitem{Sathe}
L.~G. Sathe.
\newblock On a problem of hardy on the distribution of integers having a given
  number of prime factors. ii.
\newblock {\em J. Indian Math. Soc. (N.S.)}, 17:83 -- 141, 1953.

\bibitem{Selberg}
A.~Selberg.
\newblock Note on a paper by l. g. sathe.
\newblock {\em J. Indian Math. Soc. (N.S.)}, 18:83 -- 87, 1954.

\end{thebibliography}
\bibliographystyle{plain}

\end{document}